\documentclass[a4paper] {article}
[15pt]

   \usepackage[top=2.5cm, bottom=2.5cm, left=3.5cm, right= 3.5cm]{geometry}

\makeatletter
\renewcommand*\l@section{\@dottedtocline{1}{1.5em}{2.3em}}
\makeatother

\usepackage{amsfonts}
\usepackage{amssymb}
\usepackage[T1]{fontenc}

\usepackage{tikz}
\usetikzlibrary{calc}

\usepackage{CJK}
\usepackage{amsmath}
 \usepackage{algorithmicx}
 \usepackage[ruled]{algorithm}
 \usepackage{algpseudocode}

\usepackage{amsfonts}
\usepackage{amssymb}
\usepackage{amsthm}
\usepackage{amssymb}
\usepackage{enumerate}
\usepackage[calc]{picture}
\usepackage[all,cmtip]{xy}

\usepackage[mathscr]{eucal}
\usepackage{eqlist}

\usepackage{color}
\usepackage{abstract}
\usepackage[T1]{fontenc}

\setlength{\abovecaptionskip}{0pt}
\setlength{\belowcaptionskip}{0pt}

\theoremstyle{plain}
\newtheorem{theorem}{Theorem}
\newtheorem{proposition}[theorem]{Proposition}
\newtheorem{lemma}[theorem]{Lemma}

\newtheorem{example}[theorem]{Example}
\newtheorem{corollary}[theorem]{Corollary}
\newtheorem{remark}[theorem]{Remark}

\theoremstyle{definition}
\newtheorem{definition}[theorem]{Definition}

\usepackage{etoolbox}
\newtheoremstyle{myrem}
 {3pt}
 {3pt}
 {\normalsize}
 { }
 {\itshape}
 {:}
 { }
 {}

 \theoremstyle{myrem}
 \appto\remark{\leftskip\parindent}
 \appto\remark{\rightskip\parindent}

\numberwithin{equation}{section}
\numberwithin{theorem}{section}

\begin{document}

\begin{center}
{\Large {\textbf {Witten-Morse functions and Morse inequalities on digraphs}}}
 \vspace{0.58cm} \\
\bigskip
Yong Lin$^{*}$, Chong Wang$^{\dag}$

\bigskip

\bigskip

    \parbox{24cc}{{\small
{\textbf{Abstract}.}
In this paper, we prove that discrete Morse functions on  digraphs are flat Witten-Morse functions and   Witten complexes of transitive digraphs approach to Morse complexes. We construct a chain complex consisting of the formal linear combinations of paths  which are not only critical paths of the transitive closure but also allowed elementary paths of the digraph, and prove that the homology of the new chain complex is isomorphic to the path homology. On the basis of the above results, we give the Morse inequalities on digraphs. 
}}
 \end{center}

 \vspace{1cc}


\footnotetext{\scriptsize {\bf 2010 Mathematics Subject Classification.}  	55U15,  55N35.

\rule{2.4mm}{0mm}{\bf Keywords and Phrases.}  path homology,  Witten-Morse function, Morse inequalities, transitive closure, digraph.

}
\section{Introduction}
Digraphs are  important topological models  in complex networks. A digraph $G$  is determined by a finite  set $V$ and a non-empty subset $E$ of $ V\times V\setminus\{\mathrm{diag}\}$. $V$ is called the vertex set of $G$ and $E$ is called the directed edge set of $G$. For vertices $u,v \in V$, the pair $(u,v)\in E$ is denoted as $u \rightarrow v$. $G$ is called {\it transitive}  if for any two directed edges  $u\to v$ and $v\to w$ of G, there is a directed edge  $u\to w$  of $G$.
The transitive closure of $G$ is the smallest transitive digraph containing $G$, which is denoted as $\bar{G}$ in this paper if there is no ambiguity.

Let $R$ be the real numbers. For any  integer $n\geq 0$, an {\it elementary} $n$-path is a sequence $v_0 v_1\cdots v_n$ of $n+1$ vertices in $V$. Let $\Lambda_n(V)$ be the linear space consisting of all the formal linear combinations of the $n$-paths on $V$. The $i$-th face map is defined as the $R$-linear map
 \begin{eqnarray*}
d_i : \Lambda_n(V)\longrightarrow \Lambda_{n-1}(V)
 \end{eqnarray*}
 which sends $v_0v_1\cdots v_n$ to $v_0\cdots \hat{v_i}\cdots v_n$, where $\hat{v_i}$  means omission of the vertex $v_i$. Let $\partial_n=\sum\limits_{i=0}^n (-1)^i d_i$. Then $\partial_n$ is an $R$-linear map from $\Lambda_n(V)$ to $\Lambda_{n-1}(V)$ satisfying $\partial_{n}\partial_{n+1}=0$ for each $n\geq 0$ (cf. \cite{9,10,3,4,2,yau2,1}).  Hence $\{\Lambda_n(V),\partial_n\}_{n\geq 0}$ is a chain complex.

An {\it  allowed elementary $n$-path} on $G$ is a $n$-path $v_0 v_1\ldots v_n$ on $V$ such that $v_{i-1}\to v_i$ is a directed edge of $G$ and  $v_{i-1}\not=v_i$ for each $1\leq i \leq n$. Let $P_n(G)$ be the linear space consisting of all the formal linear combinations of allowed elementary $n$-paths on $G$. Then $P_n(G)$ is a  subspace of $\Lambda_n(V)$, whereas the image  of an allowed elementary path  under the boundary operator $\partial$ does not have to be allowed. Consider the space $\Omega_n(G)$ formed by all the  linear combinations of
the $\partial$-invariant $n$-paths in $P_n(G)$. Obviously, $\Omega_n(G)$ is a subspace of $P_n(G)$. The path homology of $G$ is defined as the homology of chain complex $\{\Omega_n(G),\partial_n\}_{n\geq 0}$ and denoted as $H_*(G;R)$. That is,
\begin{eqnarray*}\label{eq-99}
H_m(G;R)=H_m(\{\Omega_n(G),\partial_n\}_{n\geq 0}),\quad m\geq 0.
\end{eqnarray*}

Morse theory can simplify the calculation of homology groups. Using Morse theory, one can determine the cell decomposition of manifolds by studying the negative inertia index of Hessian matrix of Morse functions at the critical points, so as to characterize the homology groups.  In 1925, M. Morse first invented the method of Morse theory (cf. \cite{morse}). In 1963, J.W. Milnor combed, studied and developed Morse's method, and  Morse theory was given in \cite{milnor}.  Since then,  there have been numerous  researches on Morse theory (cf. \cite{floer,schwarz}, etc). In recent years, Morse theory has been applied to cell complexes, simplicial complexes, graphs and other combinatorial objects, and discrete Morse theory  has gradually become a hot research topic (cf. \cite{ayala1,ayala2,ayala3,ayala4,forman1,forman2, forman3, witten}). 

It is well known that the homology groups of simplicial complexes or cell complexes  can be characterized by chain complexes made of the linear combinations of critical simplices. Inspired by this, in this paper, based on \cite{lin,wang}, we further study the properties of discrete Morse functions on digraphs and critical paths on transitive digraphs, characterize the path homology groups of digraphs with  chain
complex consisting of the formal linear combinations of paths which are not only critical paths of the transitive closure but also allowed elementary paths of the digraph, prove that Witten complexes of transitive digraphs  approach to Morse complexes, which is not necessarily true for general digraphs.  

Let $G$ be a digraph and $f: V(G)\longrightarrow [0,+\infty)$  a discrete Morse function on $G$ as defined in \cite{wang} and Definition~\ref{def-1.1}. By \cite[Definition~6.1]{forman1}, the (algebraic) discrete gradient vector field of $f$  is defined as an $R$-linear map $\mathrm{grad}f: P_n(G)\rightarrow P_{n+1}(G)$ such that for any allowed elementary $n$-path $\alpha$ on $G$,
\begin{eqnarray*}
(\mathrm{grad}f)(\alpha)=-\langle\partial\gamma,\alpha\rangle\gamma,
\end{eqnarray*}
where $\gamma>\alpha$ and $f(\gamma)=f(\alpha)$. Otherwise $(\mathrm{grad}f)(\alpha)=0$. Here $\langle, \rangle$ is  the  inner product in $\Lambda_n(V)$  (with respect to which the elementary $n$-paths are orthonormal).


Let $\bar{f}$ be a discrete Morse function on a transitive digraph and $\overline{V}=\mathrm{grad}\bar{f}$ the discrete gradient vector field on it. By \cite[Definition~6.2]{forman1}, the discrete gradient flow is denoted as
\begin{eqnarray*}
\overline{\Phi}=\mathrm{Id}+\partial \overline{V}+\overline{V}\partial.
\end{eqnarray*}
Let $\Delta_n(t)$ be the   Laplace operator with one-parameter $t$  and ${W}_n(t)$  the span of the eigenvectors of $\Delta_n(t)$ corresponding to the eigenvalues which tend to $0$ as $t\to \infty$. Denote $\mathrm{Crit}_ n(-)$ as the span of all  critical $n$-paths on ``$-$''.

The paper is organized as follows. In Section~\ref{sec-2}, we review the definition of  discrete Morse functions on digraphs and prove that discrete Morse functions on digraphs are discrete flat Witten-Morse functions in Proposition~\ref{pr-s1}.  Then we prove that Witten complexes approach to Morse complexes for transitive digraphs in Section~\ref{se-gg}. Furthermore, we study the path homology of general digraphs in Section~\ref{se-hh} which is divided into two subsections. In Subsection~\ref{sec-3}, we give some properties of transitive digraphs. Particularly, we characterize $\overline{\Phi}$-invariant space with  critical paths in Proposition~\ref{prop-11}. Let $G$ be a digraph and $\bar{G}$ the transitive closure of $G$. Suppose $\Omega_*(G)$ is $\overline{V}$-invariant ($\overline{V}(\Omega(G))\subseteq \Omega(G)$). Then
\begin{eqnarray*}
H_m(G;R)\cong  H_m\big(\{R(\alpha+\overline{V}\partial(\alpha))\cap \Omega_n(G),\partial_n\}_{n\geq 0}\big)
\end{eqnarray*}
where $\alpha\in\mathrm{Crit}(\bar{G})$. This is proved in Theorem~\ref{th-999}.

Moreover, in Subsection~\ref{sec-4}, we give a description of path homology of digraphs by homology of  a chain complex which is related to critical sets of the transitive closure of $G$ in Corollary~\ref{co-aa}. That is, if $\Omega_*(G)$ is $\overline{V}$-invariant and $\overline{\Phi}(\alpha)\in \Omega(G)$  for any $\alpha\in \mathrm{Crit}(\bar{G})\cap P(G)$, then
\begin{eqnarray*}
H_m(\{\mathrm{Crit}_n(\bar{G})\cap P_n(G),\tilde{\partial}_n\}_{n\geq 0})\cong H_m(G;R)
\end{eqnarray*}
where $\tilde{\partial}=(\overline{\Phi}^{\infty})^{-1}\circ\partial \circ \overline{\Phi}^{\infty}$ and $\overline{\Phi}^{\infty}$ is the stabilization map of $\overline{\Phi}$. 

Finally,  in Section~\ref{se-99}, we give the Morse inequalities on digraphs.

\section{Preliminaries}\label{sec-2}
In this section, we mainly review the definition of   discrete Morse functions on digraphs and prove that discrete Morse functions on digraphs are flat Witten-Morse functions.
\smallskip

For any allowed elementary paths $\alpha$ and $\beta$, if $\beta$ can be obtained from $\alpha$ by removing some vertices, then we write $\alpha>\beta$ or $\beta<\alpha$.
\begin{definition}(cf. \cite{wang})\label{def-1.1}
 A  map $f: V(G)\longrightarrow [0,+\infty)$ is called a  {\it discrete Morse function} on $G$, if for any allowed elementary path $\alpha=v_0v_1\cdots v_n$ on $G$, both of the followings hold:
 \begin{quote}
\begin{enumerate}[(i).]
\item
$\#\Big\{\gamma^{(n+1)}>\alpha^{(n)}\mid f(\gamma)=f(\alpha)\Big\}\leq 1$;
\item
$\#\Big\{\beta^{(n-1)}<\alpha^{(n)}\mid f(\beta)=f(\alpha)\Big\}\leq 1$.
\end{enumerate}
\end{quote}
where
\begin{eqnarray*}\label{eq-1.1}
f(\alpha)=f(v_0v_1\cdots v_n)= \sum_{i=0}^n f(v_i).
\end{eqnarray*}
\end{definition}
For an allowed elementary path $\alpha$, if in both (i) and (ii), the inequalities hold strictly, then $\alpha$ is called {\it critical}. Precisely,
\begin{definition}\label{def-1.2}
An allowed elementary $n$-path $\gamma^{(n)}$ is called {\it critical}, if both of the followings hold:
\begin{quote}
\begin{enumerate}[(i)']
\item
$\#\Big\{\beta^{(n-1)}<\alpha^{(n)}\mid f(\beta)=f(\alpha)\Big\}=0$,
\item
$\#\Big\{\gamma^{(n+1)}>\alpha^{(n)}\mid f(\gamma)=f(\alpha)\Big\}=0$.
\end{enumerate}
\end{quote}
\end{definition}

It follows from Definition~\ref{def-1.2} that an allowed elementary  $p$-path  is not critical if and only if
either of the following conditions holds
\begin{quote}
\begin{enumerate}[(i)'']
\item
there exists $\beta^{(n-1)}<\alpha^{(n)}$ such that $f(\beta)=f(\alpha)$;
\item
there exists $\gamma^{(n+1)}>\alpha^{(n)}$ such that $f(\gamma)=f(\alpha)$.
\end{enumerate}
\end{quote}

A directed loop on $G$ is an allowed elementary path $v_0 v_1 \cdots v_n v_0$, $n \geq 1$.
\begin{lemma}(cf. \cite[Lemma~2.4]{lin})\label{le-1.14}
Let $G$ be a digraph and $f$  a discrete Morse function on $G$. Let $\alpha=v_0v_1\cdots v_n v_0$ be a directed loop. Then for each $0\leq i\leq n$, $f(v_i)>0$.
\end{lemma}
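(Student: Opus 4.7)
The plan is to argue by contradiction: assume that $f(v_j)=0$ for some index $j$ and derive two distinct codimension-one faces of $\alpha$ on which $f$ takes the same value as $f(\alpha)$, violating condition (ii) of Definition~\ref{def-1.1}. The key observation is that because $\alpha$ is a directed loop, the repeated vertex ($v_0$ in the statement) can be cyclically moved to any position $v_j$ without leaving the class of allowed elementary paths.

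First I would reduce to the case $j=0$. Since $v_0\to v_1\to\cdots\to v_n\to v_0$ is a closed sequence of edges of $G$, the cyclic rotation $\alpha':=v_jv_{j+1}\cdots v_nv_0v_1\cdots v_{j-1}v_j$ is again an allowed elementary $(n+1)$-path on $G$, and it satisfies $f(\alpha')=f(\alpha)$ because the sum $\sum f(v_i)$ is invariant under rotation. So it suffices to show the claim under the assumption that the repeated endpoint is the vertex with $f$-value zero; rename for convenience and suppose $f(v_0)=0$ with $\alpha=v_0v_1\cdots v_nv_0$.

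Next I would exhibit the two competing faces. Let
\begin{eqnarray*}
\beta_1:=v_1v_2\cdots v_nv_0,\qquad \beta_2:=v_0v_1\cdots v_n,
\end{eqnarray*}
obtained from $\alpha$ by deleting the initial $v_0$ and the terminal $v_0$, respectively. Both are allowed elementary $n$-paths, since their consecutive edges are a subset of the edges $v_i\to v_{i+1}$ and $v_n\to v_0$ already present in $\alpha$. Moreover $\beta_1\neq\beta_2$ for $n\geq 1$ because they differ in their first (equivalently last) vertex. Using $f(\alpha)=2f(v_0)+\sum_{i=1}^n f(v_i)$ and $f(\beta_1)=f(\beta_2)=f(v_0)+\sum_{i=1}^n f(v_i)$, the assumption $f(v_0)=0$ yields $f(\beta_1)=f(\beta_2)=f(\alpha)$, contradicting condition (ii) in Definition~\ref{def-1.1} applied to $\alpha$.

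I do not foresee a serious obstacle; the main point to be careful about is verifying that $\beta_1,\beta_2$ (and the rotated $\alpha'$) remain allowed elementary paths, which follows immediately from the closed-loop hypothesis $v_n\to v_0\in E$. Once that is checked, the proof is just the contradiction with the codimension-one multiplicity bound in the definition of a discrete Morse function.
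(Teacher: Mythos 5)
Your proof is correct. The paper does not actually reprove this lemma---it imports it as \cite[Lemma~2.4]{lin}---but your argument (rotate the loop so the hypothetical zero-valued vertex becomes the repeated endpoint, then observe that deleting either copy of that endpoint produces two distinct allowed faces $\beta_1\neq\beta_2$ with $f(\beta_i)=f(\alpha)$, violating condition (ii) of Definition~\ref{def-1.1}) is exactly the natural argument and all the verifications (allowedness of the rotation and of the two faces, $\beta_1\neq\beta_2$ since $v_0\neq v_1$) go through. One harmless slip: $f(\alpha')=f(\alpha)$ is not literally true, since $\alpha$ double-counts $f(v_0)$ while $\alpha'$ double-counts $f(v_j)$, so they differ by $f(v_j)-f(v_0)$; but this equality is never used---you only need that $\alpha'$ is itself a directed loop containing the same vertices, to which the Morse condition can be applied.
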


\begin{lemma}(cf. \cite[Lemma~2.5]{lin})\label{le-1.13}
Let $G$ be a digraph and $f$  a discrete Morse function on $G$ as defined in Definition~\ref{def-1.1}. Then for any allowed elementary path in $G$, there exists at most one index such that the corresponding  vertex is with zero value.
\end{lemma}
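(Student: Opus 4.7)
The plan is to argue by contradiction, localizing to a contiguous sub-path. Assume some allowed elementary path $\alpha=v_0 v_1\cdots v_n$ on $G$ has two distinct vertices with zero $f$-value, say $f(v_i)=f(v_j)=0$ with $0\le i<j\le n$. The first step is to pass from $\alpha$ to the contiguous sub-path
\[
\alpha_0 = v_i v_{i+1}\cdots v_j,
\]
which is again an allowed elementary path on $G$, since each consecutive pair $v_{k-1}\to v_k$ appearing in $\alpha_0$ already occurs in $\alpha$. The strategy is then to contradict condition (ii) of Definition~\ref{def-1.1} applied at $\alpha_0$.

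The candidates for violating (ii) at $\alpha_0$ are the two sub-paths
\[
\beta_1 = v_{i+1} v_{i+2}\cdots v_j \quad\text{and}\quad \beta_2 = v_i v_{i+1}\cdots v_{j-1},
\]
obtained by deleting the initial and terminal vertices of $\alpha_0$ respectively. Both are again contiguous sub-paths of $\alpha_0$, hence allowed, and they are distinct because their initial (equivalently, terminal) vertices differ; in the degenerate case $j=i+1$ they collapse to the two distinct $0$-paths $v_{i+1}$ and $v_i$, which remain distinct since $v_i\to v_{i+1}$ forces $v_i\neq v_{i+1}$. Using the additivity $f(\alpha)=\sum_k f(v_k)$ and the hypothesis $f(v_i)=f(v_j)=0$,
\[
f(\beta_1) = f(\alpha_0)-f(v_i) = f(\alpha_0), \qquad f(\beta_2) = f(\alpha_0)-f(v_j) = f(\alpha_0),
\]
so at least two distinct sub-paths of one less length satisfy $f(\beta)=f(\alpha_0)$, directly contradicting Definition~\ref{def-1.1}(ii) at $\alpha_0$.

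I do not expect a real obstacle here. The argument relies only on additivity of $f$ along a path and on the elementary fact that a contiguous sub-path of an allowed elementary path is itself allowed; in particular it never invokes the existence of ``shortcut'' edges of the form $v_{k-1}\to v_{k+1}$, which could fail in a non-transitive digraph, so transitivity of $G$ is not needed. The only points requiring a brief sanity check are the boundary and degenerate cases $i=0$, $j=n$, and $j=i+1$, all of which are absorbed into the above argument without modification.
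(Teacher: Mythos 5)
Your argument is correct. Note that the paper does not actually prove this lemma itself --- it is quoted from \cite[Lemma~2.5]{lin} --- so there is no in-paper proof to compare against; your proposal supplies a valid self-contained proof. The key steps all check out: the contiguous sub-path $\alpha_0=v_i\cdots v_j$ is again an allowed elementary path; deleting its initial and terminal vertices produces two allowed elementary faces $\beta_1,\beta_2<\alpha_0$ of codimension one, which are distinct because $v_i\neq v_{i+1}$ (this covers the case $j=i+1$ as well, and is unaffected by the possibility that $v_i=v_j$ as vertices of $G$, since the count in Definition~\ref{def-1.1}(ii) is over distinct paths); and additivity of $f$ together with $f(v_i)=f(v_j)=0$ gives $f(\beta_1)=f(\beta_2)=f(\alpha_0)$, violating condition (ii) of Definition~\ref{def-1.1} at $\alpha_0$. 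You are also right that neither transitivity of $G$ nor Lemma~\ref{le-1.14} is needed for this statement.
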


\begin{lemma}\label{le-1.24}
Let $f$ be a discrete Morse function on digraph $G$. Then for  any allowed elementary path $\alpha=v_0v_1\cdots v_{n}$ in $G$,   (i)'' and (ii)'' cannot both be true.
\end{lemma}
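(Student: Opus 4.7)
The plan is to combine the additivity of $f$ across the positions of a path with Lemma~\ref{le-1.13}, which constrains the number of zero-valued vertices on any allowed elementary path. Both conditions (i)'' and (ii)'' assert $f$-equality between $\alpha$ and a neighbouring path (a face or a coface), and the formula $f(v_0 v_1 \cdots v_n) = \sum_{i=0}^n f(v_i)$ converts each such equality into the vanishing of $f$ on exactly one vertex. The strategy is then to produce two such vanishing vertices at distinct positions of a single allowed elementary path, violating Lemma~\ref{le-1.13}.

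Concretely, I would first unpack (i)''. If $\beta^{(n-1)} < \alpha^{(n)}$ with $f(\beta) = f(\alpha)$, then $\beta$ differs from $\alpha$ by the deletion of a single vertex, say $v_j$; additivity gives $f(v_j) = f(\alpha) - f(\beta) = 0$. Next I would unpack (ii)''. If $\gamma^{(n+1)} > \alpha^{(n)}$ with $f(\gamma) = f(\alpha)$, then $\gamma$ is obtained from $\alpha$ by inserting a single vertex $w$ at some position $k \in \{0, 1, \ldots, n+1\}$; additivity yields $f(w) = f(\gamma) - f(\alpha) = 0$.

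Now I would pass to the enlarged path $\gamma$, which is allowed elementary by hypothesis. Inside $\gamma$ the vertex $v_j$ occupies position $j$ if $j < k$ and position $j+1$ otherwise; in either case its index differs from $k$, the index of $w$. Hence $\gamma$ carries at least two distinct indices at which the corresponding vertex has $f$-value zero, directly contradicting Lemma~\ref{le-1.13}.

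The only subtlety I foresee is confirming that the positions of $v_j$ and $w$ in $\gamma$ are genuinely distinct, even if the vertex labels $v_j$ and $w$ in $V$ happen to coincide; this is handled by the index-shift argument above, since Lemma~\ref{le-1.13} speaks of indices, not labels. Beyond this bookkeeping, the proof is essentially a one-line consequence of additivity and Lemma~\ref{le-1.13}, so I do not anticipate any substantive obstacle.
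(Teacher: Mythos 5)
Your proof is correct and follows the same overall strategy as the paper's: extract one zero-valued vertex from (i)$''$ (the deleted vertex $v_j$) and one from (ii)$''$ (the inserted vertex $w$), locate both inside the enlarged allowed path $\gamma$, and contradict Lemma~\ref{le-1.13}. The one point where you genuinely diverge is the distinctness issue. The paper spends a sub-argument showing that the two zero-valued vertices are distinct \emph{as elements of} $V(G)$: if they coincided, they could not occupy consecutive positions of $\gamma$ (consecutive vertices of an allowed elementary path differ), so the segment of $\gamma$ between the two occurrences would be a directed loop through a zero-valued vertex, contradicting Lemma~\ref{le-1.14}; only then does the paper invoke Lemma~\ref{le-1.13}. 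You instead observe that the two occurrences sit at distinct \emph{indices} of $\gamma$ and appeal to Lemma~\ref{le-1.13} in its literal index formulation, bypassing Lemma~\ref{le-1.14} entirely. Since Lemma~\ref{le-1.13} is indeed stated in terms of indices, your shortcut is legitimate and yields a slightly leaner proof. What the paper's extra step buys is robustness: its argument goes through even under the weaker reading of Lemma~\ref{le-1.13} as ``at most one zero-valued vertex,'' which is evidently the reading the authors have in mind (their closing sentence speaks of ``two distinct vertices with zero value''). If a referee insisted on that weaker reading, you would need to restore exactly the paper's loop argument via Lemma~\ref{le-1.14} to rule out the case where $v_j$ and $w$ are the same vertex appearing twice in $\gamma$.
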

\begin{proof}
Suppose to the contrary. By (i)'',  there exists an allowed elementary $(n-1)$-path $\beta$  such that $\beta<\alpha$ and $f(\beta)=f(\alpha)$. Hence, there exists some $0\leq i \leq n$ such that $f(v_i)=0$. By (ii)'',  there exists an allowed elementary $(n+1)$-path $\gamma$ such that $f(\gamma)=f(\alpha)$. Hence, there exists a vertex $u\in V(G)$ with $f(u)=0$ such that $\gamma=v_0\cdots v_j uv_{j+1}\cdots v_n$. We assert that $u\not= v_i$. Suppose to the contrary.  Since $\gamma$ is allowed, it follows that $u$ and $v_i$ are not adjacent. Hence there exists a directed loop in which  $u$ is a vertex. This contradicts Lemma~\ref{le-1.14}.  Therefore, there are two distinct  vertices   with zero value in $\gamma$  which contradicts Lemma~\ref{le-1.13}.

The lemma follows.
\end{proof}

\begin{definition}(cf. \cite[Definition~0.6]{witten})\label{def-witten}
A function $f: V(G)\longrightarrow [0,+\infty)$ is called a  {\it discrete Witten-Morse function} on $G$ if, for any allowed elementary path $\alpha$,
\begin{quote}
\begin{enumerate}[(i)]
\item
$f(\alpha)<\mathrm{average}\{f(\gamma_1), f(\gamma_2)\}$ where $\gamma_1>\alpha$, $\gamma_2>\alpha$ and $\gamma_1\not=\gamma_2$;
\item
$f(\alpha)>\mathrm{average}\{f(\beta_1), f(\beta_2)\}$ where $\beta_1<\alpha$, $\beta_2<\alpha$ and $\beta_1\not= \beta_2$.
\end{enumerate}
\end{quote}
\end{definition}
Note that each Witten-Morse function is, in fact, a Morse function.

\begin{definition}(cf. \cite[Definition~0.7]{witten})\label{def-flat}
A discrete Witten-Morse funtion is {\it flat} if for any allowed elementary path $\alpha$,
\begin{quote}
\begin{enumerate}[(i)]
\item
$f(\alpha)\leq \mathrm{min}\{f(\gamma_1), f(\gamma_2)\}$ where $\gamma_1>\alpha$, $\gamma_2>\alpha$ and $\gamma_1\not= \gamma_2$;
\item
$f(\alpha)\geq \mathrm{max}\{f(\beta_1), f(\beta_2)\}$ where $\beta_1<\alpha$, $\beta_2<\alpha$ and $\beta_1\not= \beta_2$.
\end{enumerate}
\end{quote}
\end{definition}

\begin{proposition}\label{pr-s1}
Let $G$ be a digraph and $f: V(G)\longrightarrow [0,+\infty)$ a discrete Morse function on $G$. Then $f$ is a discrete flat Witten-Morse function.
\end{proposition}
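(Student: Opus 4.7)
The plan is to verify the two flatness conditions of Definition~\ref{def-flat} directly, along with the strict average inequalities of Definition~\ref{def-witten}, by reducing everything to the identity $f(\alpha)=\sum_i f(v_i)$ and the assumption that $f$ takes values in $[0,+\infty)$.

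First I would record the elementary but crucial bookkeeping: if $\gamma>\alpha$ is an allowed elementary $(n{+}1)$-path obtained from $\alpha$ by inserting a single vertex $u$, then $f(\gamma)=f(\alpha)+f(u)$; dually, if $\beta<\alpha$ is an allowed elementary $(n{-}1)$-path obtained from $\alpha$ by deleting a single vertex $v$, then $f(\beta)=f(\alpha)-f(v)$. Since $f\ge 0$, this immediately gives $f(\gamma)\ge f(\alpha)$ and $f(\beta)\le f(\alpha)$ for every $\gamma>\alpha$ and every $\beta<\alpha$. In particular, for any two distinct $\gamma_1,\gamma_2>\alpha$ we have $f(\alpha)\le\min\{f(\gamma_1),f(\gamma_2)\}$, and for any two distinct $\beta_1,\beta_2<\alpha$ we have $f(\alpha)\ge\max\{f(\beta_1),f(\beta_2)\}$. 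This is exactly flatness (i) and (ii) of Definition~\ref{def-flat}, obtained for free from nonnegativity of $f$.

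Next I would verify that $f$ is Witten-Morse in the sense of Definition~\ref{def-witten}. For (i), write $\gamma_i=\alpha$ with $u_i$ inserted, so that
\begin{equation*}
\tfrac{1}{2}\bigl(f(\gamma_1)+f(\gamma_2)\bigr)-f(\alpha)=\tfrac{1}{2}\bigl(f(u_1)+f(u_2)\bigr).
\end{equation*}
Strictness $f(\alpha)<\mathrm{average}\{f(\gamma_1),f(\gamma_2)\}$ is therefore equivalent to $f(u_1)+f(u_2)>0$. If this failed, then $f(u_1)=f(u_2)=0$ (both being nonnegative), hence $f(\gamma_1)=f(\gamma_2)=f(\alpha)$, which produces at least two distinct allowed $(n{+}1)$-paths above $\alpha$ with $f$-value equal to $f(\alpha)$, contradicting Definition~\ref{def-1.1}(i). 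Condition (ii) is proved by the dual argument: writing $\beta_i$ as $\alpha$ with vertex $v_{j_i}$ removed, the inequality $f(\alpha)>\mathrm{average}\{f(\beta_1),f(\beta_2)\}$ reduces to $f(v_{j_1})+f(v_{j_2})>0$, and its failure would contradict Definition~\ref{def-1.1}(ii).

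The main obstacle, such as it is, is essentially bookkeeping: confirming that distinct allowed elementary paths $\gamma_1\neq\gamma_2$ above $\alpha$ (respectively $\beta_1\neq\beta_2$ below $\alpha$) really do contribute two distinct elements to the sets counted in Definition~\ref{def-1.1}, so that the Morse bound $\le 1$ is genuinely violated when both added (resp.\ removed) vertices have $f$-value zero. Once this identification is made, the proposition follows with no further work; neither Lemma~\ref{le-1.14} nor Lemma~\ref{le-1.13} is needed here (they will be used in later statements rather than in this direct implication).
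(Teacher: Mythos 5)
Your proof is correct, but it follows a genuinely different and more economical route than the paper's. The paper argues by cases on whether $\alpha$ is critical, and in the non-critical case splits further according to whether (i)$''$ or (ii)$''$ holds, invoking the directed-loop lemma (Lemma~\ref{le-1.14}, really via Lemma~\ref{le-1.24}) to rule out the two conditions holding simultaneously; it then reads off the $\min$/$\max$ bounds from a three-way table of values. You instead reduce everything to the additivity $f(\gamma)=f(\alpha)+f(u)$, $f(\beta)=f(\alpha)-f(v)$: nonnegativity of $f$ alone yields the flatness inequalities of Definition~\ref{def-flat} with no case analysis (a fact the paper itself records only afterwards, in the remark following the proposition), and the strict average inequalities of Definition~\ref{def-witten} become the statement that the two inserted (resp.\ deleted) vertices cannot both have value zero, which is precisely a two-element violation of the counting bounds in Definition~\ref{def-1.1}. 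Your observation that distinct $\gamma_1\not=\gamma_2$ (resp.\ $\beta_1\not=\beta_2$) contribute distinct elements to the counted sets is the only point needing care, and you flag it correctly. What your approach buys is independence from Lemmas~\ref{le-1.14} and~\ref{le-1.13} and a proof that isolates exactly which hypothesis does what (nonnegativity gives flatness, the Morse counting bounds give strictness of the averages); what the paper's approach buys is that its case analysis doubles as a warm-up for the criticality dichotomy (i)$''$/(ii)$''$ used repeatedly in later sections.
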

\begin{proof}
Let $\alpha$ be an arbitrary allowed elementary path on $G$. Consider the following cases.

{\sc Case~1}. $\alpha$ is critical. Then by Definition~\ref{def-1.2}, we have that $f(\alpha)<f(\gamma)$ for any $\gamma>\alpha$ and $f(\alpha)>f(\beta)$ for any $\beta<\alpha$.  Hence,
\begin{eqnarray*}
f(\alpha)< \mathrm{min}\{f(\gamma_1), f(\gamma_2)\}
\end{eqnarray*}
where $\gamma_1>\alpha$ and $\gamma_2>\alpha$, and
\begin{eqnarray*}
f(\alpha)> \mathrm{max}\{f(\beta_1), f(\beta_2)\}
\end{eqnarray*}
where $\beta_1<\alpha$ and $\beta_2<\alpha$.

{\sc Case~2}. $\alpha$ is not critical. 

{\sc Subcase~2.1}. (i)'' holds for $\alpha$. Then there exists an unique allowed elementary path $\beta<\alpha$ such that $f(\beta)=f(\alpha)$.
Let $\beta_1<\alpha$, $\beta_2<\alpha$ and $\beta_1\not= \beta_2$. Then
\begin{eqnarray*}
\left\{
\begin{array}{cc}
f(\beta_1)<f(\alpha), f(\beta_2)<f(\alpha), & \text{ if  } \beta_1\not= \beta  ~\text{and}~ \beta_2\not= \beta; \\
f(\beta_1)=f(\alpha), f(\beta_2)<f(\alpha),   & \text{ if } \beta_1=\beta ~\text{and}~ \beta_2\not= \beta;\\
f(\beta_1)<f(\alpha), f(\beta_2)=f(\alpha),   & \text{ if } \beta_1\not=\beta ~\text{and}~ \beta_2=\beta.
\end{array}
\right.
\end{eqnarray*}
Hence,
\begin{eqnarray*}
f(\alpha)\geq \mathrm{max}\{f(\beta_1), f(\beta_2)\}
\end{eqnarray*}
where $\beta_1<\alpha$, $\beta_2<\alpha$ and $\beta_1\not= \beta_2$.

By Lemma~\ref{le-1.14}, (ii)'' does not hold for $\alpha$. Then for any $\gamma_1>\alpha$, $\gamma_2>\alpha$ and $\gamma_1\not= \gamma_2$, we have  that $f(\gamma_1)>f(\alpha)$ and $f(\gamma_2)>f(\alpha)$. Hence,
\begin{eqnarray*}
f(\alpha)< \mathrm{min}\{f(\gamma_1), f(\gamma_2)\}
\end{eqnarray*}
where $\gamma_1>\alpha$, $\gamma_2>\alpha$ and $\gamma_1\not=\gamma_2$.

{\sc Subcase~2.2}. (ii)'' holds for $\alpha$. Then there exists an unique allowed elementary path $\gamma>\alpha$ such that $f(\gamma)=f(\alpha)$.
Let $\gamma_1>\alpha$, $\gamma_2>\alpha$ and $\gamma_1\not= \gamma_2$. Similarly,

\begin{eqnarray*}
\left\{
\begin{array}{cc}
f(\gamma_1)>f(\alpha), f(\gamma_2)>f(\alpha), & \text{ if  } \gamma_1\not= \gamma  ~\text{and}~ \gamma_2\not= \gamma; \\
f(\gamma_1)=f(\alpha), f(\gamma_2)>f(\alpha),   & \text{ if } \gamma_1=\gamma ~\text{and}~ \gamma_2\not= \gamma;\\
f(\gamma_1)>f(\alpha), f(\gamma_2)=f(\alpha),   & \text{ if } \gamma_1\not=\gamma ~\text{and}~ \gamma_2=\gamma.
\end{array}
\right.
\end{eqnarray*}
Hence,
\begin{eqnarray*}
f(\alpha)\leq \mathrm{min}\{f(\gamma_1), f(\gamma_2)\}
\end{eqnarray*}
where $\gamma_1>\alpha$, $\gamma_2>\alpha$ and $\gamma_1\not= \gamma_2$.

By Lemma~\ref{le-1.14}, (i)'' does not hold for $\alpha$. Then for any $\beta_1>\alpha$, $\beta_2>\alpha$ and $\beta_1\not= \beta_2$, we have  that $f(\beta_1)<f(\alpha)$ and $f(\beta_2)<f(\alpha)$. Hence,
\begin{eqnarray*}
f(\alpha)>\mathrm{max}\{f(\beta_1), f(\beta_2)\}
\end{eqnarray*}
where $\beta_1<\alpha$, $\beta_2<\alpha$ and $\beta_1\not=\beta_2$.


Combining Case 1 and Case 2, the assertion is proved.
\end{proof}
\begin{remark}
The key of Proposition~\ref{pr-s1} is the definition of discrete Morse functions on digraphs. Let $f$ be a discrete Morse function on digraph $G$. Then $f(\beta)\leq f(\alpha)$ for any $\beta<\alpha$ and $f(\gamma)\geq f(\alpha)$ for any $\gamma>\alpha$. Meanwhile, for any allowed elementary path, (i)'' and (ii)'' can not both be true.
\end{remark}
Next, we consider the {\bf equivalent} discrete Morse functions on digraphs.
\begin{definition}\label{def-0}
Let $G$ be a digraph and $f: V(G)\longrightarrow [0,+\infty)$ a discrete Morse function on $G$. 
 The set  of all vertices $v\in V(G)$ such that $f(v)=0$ is called {\it zero-point set} of $f$, denoted as $S(f)$.
\end{definition}

\begin{definition}\label{def-1}(cf. \cite[Definition~1.2]{witten})
Let $f, g$ be two discrete Morse functions on $G$. We say $f$ and $g$ are equivalent if for any $n\geq 0$ and every $\alpha^{(n)}<\gamma^{(n+1)}$,
\begin{eqnarray*}
f(\alpha)<f(\gamma)\Longleftrightarrow g(\alpha)<g(\gamma).
\end{eqnarray*}
\end{definition}

\begin{proposition}\label{le-0}
Let $f, g$ be two discrete Morse functions on $G$ such that $S(f)=S(g)$. Let $\bar{f}, \bar{g}$ be the extensions of $f, g$ on transitive closure $\bar{G}$ of $G$. Then they induce the same Morse complexes.
\end{proposition}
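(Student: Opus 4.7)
The plan is to prove that both the set of critical paths of $\bar{f}$ on $\bar{G}$ and the discrete gradient vector field $\mathrm{grad}\,\bar{f}$ depend only on the zero-point set $S(\bar{f})$, so that the hypothesis $S(f)=S(g)$ forces $\mathrm{Crit}(\bar{f})=\mathrm{Crit}(\bar{g})$ and $\mathrm{grad}\,\bar{f}=\mathrm{grad}\,\bar{g}$, and hence the two Morse complexes on $\bar G$ coincide tautologically.

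First I would record the trivial observation that the transitive closure has $V(\bar G)=V(G)$, so the extensions $\bar f,\bar g$ are defined on the same vertex set with the same values as $f,g$; in particular $S(\bar f)=S(f)=S(g)=S(\bar g)$. The essential computation is then the following: for any allowed elementary $n$-path $\alpha=v_0\cdots v_n$ in $\bar G$ and any $\beta<\alpha$ obtained by deleting $v_i$, the summation formula $\bar f(\alpha)=\sum_j \bar f(v_j)$ gives $\bar f(\alpha)-\bar f(\beta)=\bar f(v_i)$, so that $\bar f(\alpha)=\bar f(\beta)$ if and only if $v_i\in S(\bar f)$. Dually, for any $\gamma>\alpha$ obtained by inserting a vertex $u$, $\bar f(\gamma)=\bar f(\alpha)$ if and only if $u\in S(\bar f)$. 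Therefore the conditions (i), (ii) of Definition~\ref{def-1.2}, as well as their negations (i)'', (ii)'', refer only to the combinatorial data consisting of the allowed elementary paths of $\bar G$ and the subset $S(\bar f)\subseteq V(\bar G)$.

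Since $\bar G$ is the same digraph in both cases and $S(\bar f)=S(\bar g)$, the set of critical paths is the same: $\mathrm{Crit}_n(\bar f)=\mathrm{Crit}_n(\bar g)$ for every $n$. Moreover, for each non-critical allowed elementary $n$-path $\alpha$, the unique $\gamma>\alpha$ realizing $\bar f(\gamma)=\bar f(\alpha)$ is characterized by $\gamma=v_0\cdots v_j u v_{j+1}\cdots v_n$ for some $u\in S(\bar f)$ for which such an insertion is allowed in $\bar G$; this characterization is identical for $\bar g$, so the same $\gamma$ is selected, and consequently
\begin{equation*}
(\mathrm{grad}\,\bar f)(\alpha)=-\langle\partial\gamma,\alpha\rangle\gamma=(\mathrm{grad}\,\bar g)(\alpha).
\end{equation*}
Hence $\overline V_f:=\mathrm{grad}\,\bar f$ and $\overline V_g:=\mathrm{grad}\,\bar g$ coincide as $R$-linear maps, and the associated discrete gradient flows $\overline\Phi_f=\mathrm{Id}+\partial\overline V_f+\overline V_f\partial$ and $\overline\Phi_g$ are identical.

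Finally, the Morse complex is built from $\mathrm{Crit}_*(\bar f)$ together with the boundary $\tilde\partial=(\overline\Phi^\infty)^{-1}\circ\partial\circ\overline\Phi^\infty$ (as in the statement of Corollary~\ref{co-aa}), and all three ingredients $\mathrm{Crit}_*$, $\partial$, $\overline\Phi^\infty$ depend only on the data we have just shown to agree for $\bar f$ and $\bar g$. This yields the desired identification of Morse complexes. The main (and only) subtlety I anticipate is phrasing the insertion/deletion argument so it applies uniformly in $\bar G$ rather than in $G$; since allowed paths in $\bar G$ differ from those in $G$ but the vertex-sum formula for $\bar f(\alpha)$ is unchanged, this is a purely notational issue and the proof is essentially a direct verification.
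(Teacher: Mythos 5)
Your proof is correct and follows essentially the same route as the paper: both arguments reduce everything to the observation that for $\alpha<\gamma$ one has $\bar f(\gamma)-\bar f(\alpha)=\bar f(u)$ for the inserted vertex $u$, so equality across a face relation is equivalent to $u\in S(\bar f)=S(\bar g)$, which forces the critical sets and gradient fields to coincide. The only difference is presentational -- the paper packages this via the notion of equivalent Morse functions (Definition~\ref{def-1}) before concluding $\overline{V}_{\bar f}=\overline{V}_{\bar g}$ and $\mathrm{Crit}_{\bar f}(\bar G)=\mathrm{Crit}_{\bar g}(\bar G)$, while you verify those identities directly.
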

\begin{proof}
Let $\alpha^{(n)}$ and $\gamma^{(n+1)}$ be allowed elementary paths on $\bar{G}$ such that $\alpha<\gamma$ and  $\bar{f}(\alpha)<\bar{f}(\gamma)$. We assert that $\bar{g}(\alpha)<\bar{g}(\gamma)$. Suppose to the contrary, $\bar{g}(\gamma)=\bar{g}(\alpha)$. Then there exists a vertex $v$ in $V(\gamma)\setminus V(\alpha)$ (Consider $\alpha$ and $\gamma$ as subgraphs of $\bar{G}$) such that $\bar{g}(v)=0$.
Since $S(f)=S(g)$, it follows that $S(\bar{f})=S(\bar{g})$. Hence $\bar{f}(v)=0$ and $\bar{f}(\alpha)=\bar{f}(\gamma)$ which contradict $\bar{f}(\alpha)<\bar{f}(\gamma)$. Thus,
\begin{eqnarray*}
\bar{f}(\alpha)<\bar{f}(\gamma)\Longrightarrow \bar{g}(\alpha)<\bar{g}(\gamma).
\end{eqnarray*}

Similarly, we have that
\begin{eqnarray*}
\bar{g}(\alpha)<\bar{g}(\gamma)\Longrightarrow \bar{f}(\alpha)<\bar{f}(\gamma).
\end{eqnarray*}
Hence, by Definition~\ref{def-1}, $\bar{f}$ and  $\bar{g}$ are equivalent. Therefore, for any allowed elementary path $\alpha$ on $\bar{G}$,
\begin{eqnarray*}
\overline{V}_{\bar{f}}(\alpha)&=&\overline{V}_{\bar{g}}(\alpha),\\
\overline{\Phi}_{\bar{f}}(\alpha)&=&\overline{\Phi}_{\bar{g}}(\alpha)\\
\end{eqnarray*}
and
\begin{eqnarray*}\label{eq-le0}
\mathrm{Crit}_{\bar{f}}(\bar{G})=\mathrm{Crit}_{\bar{g}}(\bar{G})
\end{eqnarray*}
which imply that the induced Morse complexes are the same.
 \end{proof}

\bigskip
\section{Witten Complexes of Transitive  Digraphs}\label{se-gg}
In this section, we prove that  Witten complexes approach to Morse complexes for transitive digraphs.
\bigskip

Let $G$ be a transitive digraph. Similar to \cite{witten}, consider the chain complex
\begin{eqnarray}\label{eq-111}
0{\longrightarrow} \Omega_n({G}) \overset{\partial}{\longrightarrow} \Omega_{n-1}({G})\overset{\partial}{\longrightarrow}\cdots \overset{\partial}{\longrightarrow}\Omega_0({G})\longrightarrow 0.
\end{eqnarray}
Define a  chain homomorphism
\begin{eqnarray*}
e^{tf}:\Omega_n({G})\longrightarrow \Omega_n({G})
\end{eqnarray*}
by setting
\begin{eqnarray}\label{eq-l22}
e^{tf}(\alpha)=e^{tf(\alpha)}\alpha
\end{eqnarray}
for any allowed elementary path $\alpha$ on ${G}$, and  extending linearly to  $\Omega({G})$. Replace the boundary operator $\partial$ with
\begin{eqnarray*}
\partial_t=e^{tf}\partial e^{-tf}.
\end{eqnarray*}
Then
\begin{eqnarray*}
\partial_t(\alpha)&=& e^{tf}\partial e^{-tf}(\alpha)\\
&=&e^{tf}\partial e^{-tf(\alpha)}\alpha\\
&=&e^{-tf(\alpha)}e^{tf}(\partial \alpha)\\
&=&\sum\limits_{\beta<\alpha,\beta\in \Omega({G})}e^{t[f(\beta)-f(\alpha)]}\beta.
\end{eqnarray*}
Hence, $\partial_t(\alpha)\in \Omega({G})$ which implies that
\begin{eqnarray}\label{eq-23}
0{\longrightarrow} \Omega_n({G})\overset{\partial_{t}}{\longrightarrow}\Omega_{n-1}({G})\overset{\partial_{t}}{\longrightarrow}\cdots \overset{\partial_{t}}{\longrightarrow}\Omega_0({G})\longrightarrow 0
\end{eqnarray}
is still a chain complex.  Moreover, by a similar argument to \cite[Section~5.1, P.54]{supersymmetry}, we have that
\begin{proposition}\label{prop-32}
Let $G$ be a transitive digraph. Then for each $t\in  R$, the complexes (\ref{eq-111}) and (\ref{eq-23}) have the same path homology. That is,
\begin{eqnarray*}
 H_m(\{\Omega_n({G}),\partial_n\}_{n\geq 0})\cong H_m(\{\Omega_n({G}),\partial_t\}_{n\geq 0}).
\end{eqnarray*}
\end{proposition}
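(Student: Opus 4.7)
The plan is to realize the map $e^{tf}$ as a chain isomorphism between the two complexes, from which the isomorphism on homology follows automatically.

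First I would verify that $e^{tf}$ restricts to an automorphism of $\Omega_n(G)$ for each $n$. By construction $e^{tf}$ acts diagonally on the basis of allowed elementary paths with nonzero eigenvalues $e^{tf(\alpha)}$, so it is an invertible linear map on $P_n(G)$ with inverse $e^{-tf}$. What requires checking is that it sends $\Omega_n(G)$ into itself. For this I would use the hypothesis that $G$ is transitive: since a transitive digraph has no self-loops, any directed cycle $v_0\to v_1\to\cdots\to v_k\to v_0$ with $k\geq 1$ would give, by repeated application of transitivity, the forbidden edge $v_0\to v_0$. Hence $G$ is a DAG and the relation $u\to v$ defines a strict partial order on $V$. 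Consequently every allowed elementary path is a strict chain $v_0\prec v_1\prec\cdots\prec v_n$, and so is every face obtained by omitting a vertex. This shows $\partial P_n(G)\subseteq P_{n-1}(G)$, so $\Omega_n(G)=P_n(G)$ for transitive $G$, and in particular $e^{\pm tf}$ preserves $\Omega_n(G)$.

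Second, I would check the chain-map identity. From the very definition $\partial_t=e^{tf}\partial e^{-tf}$ one reads off
\begin{eqnarray*}
\partial_t\circ e^{tf}=e^{tf}\partial e^{-tf}\circ e^{tf}=e^{tf}\circ\partial,
\end{eqnarray*}
so $e^{tf}\colon(\Omega_*(G),\partial)\to(\Omega_*(G),\partial_t)$ is a chain map. Because $e^{tf}$ is invertible, it is in fact a chain isomorphism, hence induces an isomorphism on homology, which is exactly the conclusion of the proposition.

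The main obstacle is the first step: one must justify that $e^{tf}$ stays inside $\Omega_n(G)$ rather than just $P_n(G)$, and this is precisely where transitivity is essential; for a general digraph the relation $\partial\alpha\in P_{n-1}(G)$ need not be preserved by the diagonal rescaling $\alpha\mapsto e^{tf(\alpha)}\alpha$ because cancellations among non-allowed faces would be destroyed. The computation of $\partial_t(\alpha)$ displayed in the excerpt is consistent with this observation: for transitive $G$ the sum ranges over all $\beta<\alpha$, and every such $\beta$ automatically lies in $\Omega(G)$. Once this identification is in place, the remainder of the argument is purely formal conjugation algebra and needs no spectral input.
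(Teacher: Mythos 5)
Your proof is correct and follows essentially the same route as the paper: both arguments rest on the observation that $\Omega_*(G)=P_*(G)$ for transitive $G$ and that $e^{tf}$ is an invertible chain map from $(\Omega_*(G),\partial)$ to $(\Omega_*(G),\partial_t)$, hence induces an isomorphism on homology. Your version is in fact a bit more explicit than the paper's, since you justify the identification $\Omega_*(G)=P_*(G)$ and state the conjugation identity $\partial_t\circ e^{tf}=e^{tf}\circ\partial$ directly rather than checking kernels and images separately.
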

\begin{proof}
Note that $\Omega(G)=P(G)$ for transitive digraphs. For any $x\in \mathrm{Ker}\partial$, under the map (\ref{eq-l22}), we have that
\begin{eqnarray*}
\partial_te^{tf}(x)&=&e^{tf}\partial(x)\\
&=&e^{tf}(\partial x)\\
&=&0.
\end{eqnarray*}
That is, $e^{tf}(x)\in\mathrm{Ker}\partial_t$. And if $x=(\partial y)\in\mathrm{Im}\partial$, then

\begin{eqnarray*}
\partial_te^{tf}(y)&=&e^{tf}\partial(y)\\
&=&e^{tf}(x).
\end{eqnarray*}
Hence,  $e^{tf}(x)\in\mathrm{Im}\partial_t$. 

Therefore,  the invertible map of (\ref{eq-l2}) maps $\partial$-invariant paths which are closed but not exact in the usual sense to $\partial$-invariant paths which are closed but not exact in the sense of $\partial_t$.

The proposition is proved.

\end{proof}

Let
\begin{eqnarray*}
\Delta_n(t)=\partial_t\partial_t^{*}+\partial_t^{*}\partial_t
\end{eqnarray*}
be  the Laplace operator induced by $\partial_t$ where $\partial_t^{*}$ is the adjoint of $\partial_t$ with respect to the inner product on the chain spaces $\Lambda_*(V)$ such that all paths are orthonormal.
Then by \cite[ Section~3.1]{torsion},
\begin{eqnarray*}
\mathrm{Ker}(\Delta_n(t))\cong H_m(\{\Omega_n(G),\partial_t\}_{n\geq 0}).
\end{eqnarray*}
Hence, by Proposition~\ref{prop-32},
\begin{eqnarray}\label{eq-001}
\mathrm{Ker}(\Delta_n(t))\cong H_m(\{\Omega_n(G),\partial_n\}_{n\geq 0}).
\end{eqnarray}

\smallskip

Denote ${W}_n(t)$ as the span of the eigenvectors of $\Delta_n(t)$ corresponding to the eigenvalues which tend to $0$ as $t\to \infty$. Since $\Delta(t)\partial_t=\partial_t\Delta(t)$, $\partial_t$ preserves the eigenspaces.
The Witten complex is defined as %
\begin{eqnarray*}\label{eq-hh}
0{\longrightarrow} W_n(t)\overset{\partial_{t}}{\longrightarrow}W_{n-1}(t)\overset{\partial_{t}}{\longrightarrow}\cdots \overset{\partial_{t}}{\longrightarrow}W_0(t)\longrightarrow 0.
\end{eqnarray*}
Let $\mathrm{Crit}_ n({G})$ be the span of the  critical $n$-paths on $G$. We have the following theorem.
\begin{theorem}\label{th-40}
Let $G$ be a transitive digraph and $f$ a discrete Morse function on $G$.  Then
\begin{eqnarray*}\label{eq-41}
\lim\limits_{t\to \infty}{{W}_n(t)}=\mathrm{Crit}_n({G}).
\end{eqnarray*}
\end{theorem}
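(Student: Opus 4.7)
The plan is to compute $\Delta_n(t)$ explicitly in the canonical orthonormal basis of allowed elementary $n$-paths (which, since $G$ is transitive, is also a basis of $\Omega_n(G)$) and exhibit it as a convergent one-parameter family whose limit is diagonal with spectrum $\{0,1\}$, the $0$-eigenspace being exactly $\mathrm{Crit}_n(G)$.

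First I would write down $\partial_t$ and $\partial_t^{*}$ in this basis. The formula for $\partial_t$ is already in the paper, and taking adjoints—using that $e^{tf}$ is self-adjoint in the orthonormal basis—gives
\begin{eqnarray*}
\partial_t^{*}(\alpha)=\sum_{\gamma>\alpha}\langle\partial\gamma,\alpha\rangle\,e^{t(f(\alpha)-f(\gamma))}\gamma.
\end{eqnarray*}
By Proposition~\ref{pr-s1}, $f$ is flat Witten--Morse, so $f(\beta)\le f(\alpha)$ for every $\beta<\alpha$ and $f(\gamma)\ge f(\alpha)$ for every $\gamma>\alpha$; equality in either relation forces the extremal path to be the unique Morse pair of $\alpha$, and by Lemma~\ref{le-1.24} the ``paired down'' and ``paired up'' cases for $\alpha$ are mutually exclusive.

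Next I would evaluate $\Delta_n(t)\alpha$ on the three classes (critical, paired down, paired up). If $\alpha$ is critical, every exponent appearing in $\partial_t\alpha$ and $\partial_t^{*}\alpha$ is strictly negative, so both vanish in the limit and $\Delta_n(t)\alpha\to 0$. If $\alpha$ is paired down with $\beta_0<\alpha$, then $\partial_t\alpha\to\epsilon\beta_0$ with $\epsilon=\langle\partial\alpha,\beta_0\rangle=\pm 1$, while $\partial_t^{*}\alpha\to 0$; since $\alpha$ is the unique coface of $\beta_0$ with $f(\alpha)=f(\beta_0)$, a second application yields $\partial_t^{*}\partial_t\alpha\to\epsilon^{2}\alpha=\alpha$, hence $\Delta_n(t)\alpha\to\alpha$. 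The paired-up case is symmetric. A parallel computation of the off-diagonal inner products $\langle\partial_t\alpha_1,\partial_t\alpha_2\rangle$ and $\langle\partial_t^{*}\alpha_1,\partial_t^{*}\alpha_2\rangle$ for distinct basis paths shows that every off-diagonal entry $\langle\Delta_n(t)\alpha_1,\alpha_2\rangle$ decays exponentially, because the only contributions that could avoid decay would require a common face or coface realizing the Morse pair value for both $\alpha_1$ and $\alpha_2$, which uniqueness of pairings forbids.

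Having shown that $\Delta_n(t)$ converges entrywise to the diagonal operator $D$ with $D_{\alpha\alpha}=0$ on $\mathrm{Crit}_n(G)$ and $D_{\alpha\alpha}=1$ elsewhere, I would close by invoking standard perturbation theory for self-adjoint operators on a finite-dimensional inner product space: the eigenvalues and spectral projectors of $\Delta_n(t)$ converge to those of $D$, so exactly $\dim\mathrm{Crit}_n(G)$ eigenvalues tend to $0$ while the rest tend to $1$, and the span $W_n(t)$ of eigenvectors whose eigenvalues tend to $0$ converges as a subspace to the $0$-eigenspace of $D$, which is $\mathrm{Crit}_n(G)$. The main obstacle will be the off-diagonal bookkeeping in the case analysis: one must track sign conventions carefully and verify that any non-decaying exponent $e^{t(f(\beta)-f(\alpha))}$ or $e^{t(f(\alpha)-f(\gamma))}$ is forced by the unique Morse pairing of $\alpha$, so that no cross-pair terms survive in the limit. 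Apart from this, the argument is a finite-dimensional transcription of Witten's original deformation idea.
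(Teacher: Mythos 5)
Your proposal is correct and follows essentially the same route as the paper: both rest on the asymptotic diagonalization of $\Delta_n(t)$ in the orthonormal basis of allowed $n$-paths, with diagonal entry tending to $0$ precisely when $\alpha$ is critical and off-diagonal entries decaying exponentially, which for a flat Witten--Morse function (Proposition~\ref{pr-s1}) is exactly the content of the formula the paper imports from Forman's Theorem~2.1 of \cite{witten}. The only difference is that you re-derive that formula by direct computation of $\partial_t^{*}\partial_t+\partial_t\partial_t^{*}$ and make the final spectral-convergence step explicit via finite-dimensional perturbation theory, whereas the paper cites the formula and leaves that step implicit.
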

\begin{proof}
Since $G$ is  transitive, $P_n(G)=\Omega_n(G)$ for each $n\geq 0$. By \cite[Theorem~2.1]{witten} and  Proposition~\ref{pr-s1}, we have that
\begin{eqnarray*}
\Delta_n(t)\alpha&=&[\sum_{\beta<\alpha}{\langle\partial \alpha, \beta\rangle}^2e^{2t(f(\beta)-f(\alpha))}+\sum_{\gamma>\alpha}{\langle\partial \gamma, \alpha\rangle}^2e^{2t(f(\alpha)-f(\gamma))}]\alpha+O(e^{-tc})
\end{eqnarray*}
for some $c>0$, where $\gamma,\alpha, \beta$ are allowed elementary paths on $G$. Hence, if and only if $\alpha$ is critical, the eigenvalues of $\Delta_n(t)$
\begin{eqnarray*}\label{eq-s3}
\langle\Delta_n(t)\alpha,\alpha\rangle=\sum_{\beta<\alpha}{\langle\partial \alpha, \beta\rangle}^2e^{2t(f(\beta)-f(\alpha))}+\sum_{\gamma>\alpha}{\langle\partial \gamma, \alpha\rangle}^2e^{2t(f(\alpha)-f(\gamma))}
\end{eqnarray*}
tend to $0$ as $t\to \infty$.

The theorem is proved.
\end{proof}
\begin{corollary}\label{co-ff}
Let $G$ be a transitive digraph. Then Witten complex $\{W_n(t),\partial_t\}_{n\geq 0}$ approaches to the complex $\{\mathrm{Crit}_n(G), \tilde\partial_n\}_{n\geq 0}$.
\end{corollary}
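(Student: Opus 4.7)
The corollary is essentially a packaging of Theorem~\ref{th-40} together with the general principle (originating in Witten's paper and adapted here via the transitivity assumption $P_n(G)=\Omega_n(G)$) that the Witten deformation $\partial_t$, once restricted to the small-eigenvalue eigenspaces $W_n(t)$, converges after suitable rescaling to the Morse differential on the span of critical paths. The plan therefore has two parts: identify the limiting chain spaces, and identify the limiting differential.

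First I would invoke Theorem~\ref{th-40} to conclude that $\lim_{t\to\infty}W_n(t)=\mathrm{Crit}_n(G)$ as subspaces of $\Omega_n(G)=P_n(G)$, where the equality of these two spaces uses the transitivity of $G$. Since $\Delta_n(t)$ and $\partial_t$ commute, $\partial_t$ preserves each eigenspace of $\Delta_n(t)$; in particular $\partial_t(W_n(t))\subseteq W_{n-1}(t)$, so $\{W_n(t),\partial_t\}_{n\geq 0}$ is indeed a subcomplex of (\ref{eq-23}). Combining this with Proposition~\ref{prop-32} and the Hodge-type identification (\ref{eq-001}), the inclusion $\{W_n(t),\partial_t\}_{n\geq 0}\hookrightarrow\{\Omega_n(G),\partial_t\}_{n\geq 0}$ is a quasi-isomorphism for every finite $t$, which is the structural reason to expect convergence to a Morse-type complex in the limit.

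Next I would analyze the limiting behaviour of $\partial_t$ on $W_n(t)$. For a critical path $\alpha\in\mathrm{Crit}_n(G)$, the formula
\begin{eqnarray*}
\partial_t(\alpha)=\sum_{\beta<\alpha,\;\beta\in\Omega(G)}e^{t[f(\beta)-f(\alpha)]}\beta
\end{eqnarray*}
shows that only faces with $f(\beta)<f(\alpha)$ contribute exponentially small weights, while faces with $f(\beta)=f(\alpha)$ do not occur (by criticality of $\alpha$ and Definition~\ref{def-1.2}). Projecting onto $W_{n-1}(t)$ (equivalently, conjugating by the stabilization map $\overline{\Phi}^\infty$ coming from the gradient flow of $f$, as used in the definition of $\tilde\partial$) absorbs the remaining $V$-pair cancellations, so in the $t\to\infty$ limit the operator $\partial_t|_{W_n(t)}$ tends, under the identification of $W_n(t)$ with $\mathrm{Crit}_n(G)$, precisely to $\tilde\partial_n=(\overline{\Phi}^\infty)^{-1}\circ\partial\circ\overline{\Phi}^\infty$. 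This is the step I expect to require the most care: one must argue that the asymptotic expansion of the eigenprojection onto $W_n(t)$ matches the combinatorial formula for $\overline{\Phi}^\infty$ in the transitive setting, which is exactly where the flat Witten-Morse property of Proposition~\ref{pr-s1} is needed to rule out off-diagonal contributions of the same order.

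Finally, assembling these two convergence statements gives that the chain complex $\{W_n(t),\partial_t\}_{n\geq 0}$ approaches $\{\mathrm{Crit}_n(G),\tilde\partial_n\}_{n\geq 0}$ as $t\to\infty$, which is the claim. The main obstacle is the second step: controlling $\partial_t$ on $W_n(t)$ in the limit. Everything else reduces to Theorem~\ref{th-40}, Proposition~\ref{prop-32}, and the definition of $\tilde\partial$ from the gradient flow $\overline{\Phi}$.
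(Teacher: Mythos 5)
Your first paragraph is essentially the paper's entire proof, except that you never invoke the one fact the paper actually leans on: by \cite[Theorem~2.1]{lin}, $H_m(\{\mathrm{Crit}_n(G),\tilde\partial_n\}_{n\geq 0})\cong H_m(\{\Omega_n(G),\partial_n\}_{n\geq 0})$. The paper's argument is deliberately soft: the Morse complex and the Witten complex both compute the path homology (the former by that cited combinatorial result, the latter by Proposition~\ref{prop-32} together with (\ref{eq-001})), and the chain spaces converge by Theorem~\ref{th-40}; ``approaches'' is read as exactly this conjunction, and no analysis of $\partial_t\mid_{W_n(t)}$ as $t\to\infty$ is attempted. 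If you intend only this weak reading, you need to add the citation for the Morse-complex side, and then you are done.

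Your second paragraph aims at the stronger statement that $\partial_t\mid_{W_n(t)}$, transported to $\mathrm{Crit}_n(G)$, converges to $\tilde\partial_n=(\overline{\Phi}^{\infty})^{-1}\circ\partial\circ\overline{\Phi}^{\infty}$. That is not proved in the paper, and your sketch of it has a genuine gap: the claim that projecting onto $W_{n-1}(t)$ ``absorbs the remaining $V$-pair cancellations'' and that ``the asymptotic expansion of the eigenprojection onto $W_n(t)$ matches the combinatorial formula for $\overline{\Phi}^{\infty}$'' is precisely the assertion that would have to be established, and no argument is given for it. Note also that a critical path $\alpha$ need not itself lie in $W_n(t)$ for finite $t$; one must work with the actual eigenvectors, whose expansions in the path basis carry off-diagonal terms of order $e^{-tc}$, and showing that the leading behaviour of $\partial_t$ on these eigenvectors reproduces $\tilde\partial$ (rather than merely $\partial$ followed by orthogonal projection onto $\mathrm{Crit}_{n-1}(G)$) requires a computation in the spirit of \cite[Theorem~2.1]{witten} that is absent from your write-up. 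So either drop the second step and close the argument the paper's way, or supply that computation in full.
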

\begin{proof}
By \cite[Theorem~2.1]{lin},
\begin{eqnarray*}
 H_m(\{\mathrm{Crit_n(G)},\tilde\partial_n\}_{n\geq 0}) \cong H_{m}(\{\Omega_n(G),\partial_n\}_{n\geq 0}).
\end{eqnarray*}
By (\ref{eq-001}), for all $t$,
\begin{eqnarray*}
H_m(\{W_n(t),\partial_t\}_{n\geq 0})\cong H_{m}(\{\Omega_n(G),\partial_n\}_{n\geq 0}).
\end{eqnarray*}
Therefore, by Theorem~\ref{th-40}, the assertion is  followed.
\end{proof}

\bigskip
Note that  for general digraph $G$, the image of each $\partial$-invariant element $x\in \Omega_n(G)$ under $\partial_t$ may be not in $\Omega_{n-1}(G)$. This implies that $\{\Omega(G),\partial_t\}$  is not a chain complex in general. For example,
\begin{example}\label{ex-b}
Let $G$ be a square with vertex set $V=\{v_0,v_1,v_2,v_3\}$ and directed edge set $E=\{v_0v_1,v_0v_2,v_1v_3,v_2v_3\}$. Then
\begin{eqnarray*}
\Omega(G)=\{v_0,v_1,v_2,v_3,v_0v_1,v_0v_2,v_1v_3,v_2v_3,v_0v_1v_3-v_0v_2v_3\}
\end{eqnarray*}
and
\begin{eqnarray}
\partial_t(v_0v_1v_3-v_0v_2v_3)&=&e^{tf}\partial e^{-tf}(v_0v_1v_3-v_0v_2v_3)\nonumber\\
&=&e^{tf}\partial e^{-tf}(v_0v_1v_3)-e^{tf}\partial e^{-tf}(v_0v_2v_3)\nonumber\\
&=&e^{-tf(v_0v_1v_3)}e^{tf}\partial(v_0v_1v_3)-e^{-tf(v_0v_2v_3)}e^{tf}\partial(v_0v_2v_3) \nonumber\\
&=&[e^{t[f(v_0v_1)-f(v_0v_1v_3)]}v_0v_1+e^{t[f(v_1v_3)-f(v_0v_1v_3)]}v_1v_3]\nonumber\\
&&-[e^{t[f(v_0v_2)-f(v_0v_2v_3)]}v_0v_2+e^{t[f(v_2v_3)-f(v_0v_2v_3)]}v_2v_3]\nonumber\\
&&+[e^{t[f(v_0v_3)-f(v_0v_2v_3)]}v_0v_3-e^{t[f(v_0v_3)-f(v_0v_1v_3)]}v_0v_3]\label{eq-a}.
\end{eqnarray}
Since the coefficient of $v_0v_3$ in (\ref{eq-a}) may  {\bf not} be zero, it follows that
\begin{eqnarray*}
\partial_t(v_0v_1v_3-v_0v_2v_3)\not\in\Omega_1(G).
\end{eqnarray*}

Moreover, 
\begin{eqnarray*}
\partial^{*}\mid_{\Omega(G)}(v_0v_1)&=&v_0v_1v_3,\\
(\partial\mid_{\Omega(G)})^{*}(v_0v_1)&=&v_0v_1v_3-v_0v_2v_3.
\end{eqnarray*}
Hence,
\begin{eqnarray*}
\partial^{*}\mid_{\Omega(G)}&\not=&(\partial\mid_{\Omega(G)})^{*}.
\end{eqnarray*}
Therefore, we will further consider the path homology of general digraphs based on the results of \cite{lin} instead of techniques from Hodge theory.
\end{example}

\section{Description of Path Homology of Digraphs}\label{se-hh}
 In this section, we will characterize the path homology of digraphs by chain complex related to critical sets of transitive closure of digraphs.

\bigskip

\subsection{$\overline{\Phi}$-invariant Module of Transitive Digraph}\label{sec-3}
Firstly, we  give some properties of critical paths on  transitive digraphs. 
\begin{proposition}
Let $G$ be a digraph. Then $G$ is transitive if and only if for any allowed elementary paths $\gamma^{(n+2)}> \alpha^{(n+1)} > \beta^{(n)}$, there exists an allowed elementary $n$-path $\alpha'^{(n+1)}\neq \alpha^{(n+1)}$ such that $\gamma > {\alpha'}  > \beta $.
\end{proposition}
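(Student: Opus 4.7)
The plan is a direct combinatorial argument: since $\beta$ is obtained from $\gamma$ by deleting two vertices, there are exactly two intermediate $(n+1)$-paths arising from the two possible orders of deletion, and one of them is $\alpha$. I will take $\alpha'$ to be the \emph{other} one and reduce the problem to deciding when $\alpha'$ is allowed. Throughout, write $\gamma = v_0 v_1 \cdots v_{n+2}$, let $v_i$ be the vertex of $\gamma$ missing from $\alpha$, and $v_j$ the further vertex of $\alpha$ missing from $\beta$, so $i \neq j$.

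For the forward direction, I would assume $G$ is transitive and set $\alpha'$ to be $\gamma$ with $v_j$ deleted. Then $\gamma > \alpha' > \beta$ is automatic by construction, and $\alpha' \neq \alpha$ since $i \neq j$. The only real content is the allowedness of $\alpha'$. If $j \in \{0, n+2\}$, then $\alpha'$ is a genuine subpath of $\gamma$ and is automatically allowed. If $0 < j < n+2$, then the unique new consecutive pair appearing in $\alpha'$ is $v_{j-1} \to v_{j+1}$, which is provided by transitivity applied to $v_{j-1} \to v_j$ and $v_j \to v_{j+1}$; moreover $v_{j-1} \neq v_{j+1}$, for otherwise $v_{j-1} \to v_j \to v_{j-1}$ together with transitivity would create a forbidden self-loop in $G$.

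For the converse, I would specialize the hypothesis to $n = 0$. Given any directed edges $u \to v$ and $v \to w$ in $G$, the triple $\gamma = uvw$, $\alpha = uv$ (delete $w$), $\beta = u$ (delete $v$) consists of allowed paths with $\gamma > \alpha > \beta$, so there must exist an allowed $1$-path $\alpha' \neq uv$ with $uvw > \alpha' > u$. The only vertex-deletion of $\gamma$ that still contains the vertex $u$ and differs from $uv$ is $\alpha' = uw$, so its being allowed forces $u \to w \in E(G)$, which is exactly the transitivity closure condition. The main obstacle in the whole proof is the middle-index case of the forward direction, where one must verify that the newly created adjacency in $\alpha'$ really is an edge of $G$ and is not a self-loop; after that single application of transitivity, everything else is bookkeeping.
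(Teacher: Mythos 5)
Your overall strategy coincides with the paper's: in the forward direction you obtain $\alpha'$ by deleting from $\gamma$ the \emph{other} of the two vertices whose removal produces $\beta$, using transitivity to supply the single new adjacency, and in the converse you specialize to $n=0$ with $\gamma=uvw$, $\alpha=uv$, $\beta=u$. Your forward direction is in fact tidier than the paper's: the paper first splits into two cases by citing an external result and only treats directly the case where the two deleted vertices are consecutive in $\gamma$, whereas your argument handles every position of $j$ uniformly; you also verify $v_{j-1}\neq v_{j+1}$ (via the exclusion of self-loops), a point the paper passes over, and your remark that $d_i\gamma\neq d_j\gamma$ for $i\neq j$ because consecutive vertices of an allowed path differ is a worthwhile explicit check.

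Your converse, however, has a gap --- one the paper's own proof shares. The claim that ``the only vertex-deletion of $\gamma=uvw$ that still contains $u$ and differs from $uv$ is $uw$'' fails when $w=u$, i.e.\ when $G$ contains the directed $2$-cycle $u\to v\to u$: then $vw=vu$ is an allowed $1$-path containing $u$, distinct from $uv$, with $uvu>vu>u$, so the hypothesis does not force $u\to w=u\to u$ to be an edge (nor could it be, since $E$ excludes the diagonal). This is not just a missing case. With the paper's literal definition of transitivity, the digraph with $V=\{a,b\}$ and $E=\{a\to b,\ b\to a\}$ satisfies the right-hand condition for every $n$ (for an alternating $\gamma$ the only allowed $(n+1)$-subpaths are the two end-deletions, and each lies above both allowed $n$-paths), yet it is not transitive because $a\to a\notin E$. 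So the ``if'' direction as written needs either the extra hypothesis that $G$ has no directed $2$-cycles or an argument that the condition excludes them --- and the example shows it does not. Your forward direction is unaffected, since transitivity together with the absence of self-loops already rules out $v_{j-1}=v_{j+1}$ there.
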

\begin{proof}
Suppose $\gamma^{(n+2)}> \alpha^{(n+1)} > \beta^{(n)}$. Then by \cite[Proposition~2.6]{wang}, there are two cases.

 {\sc Case~1}. There exists an allowed elementary $(n+1)$-path $\alpha'^{(n+1)}\neq \alpha^{(n+1)}$ such that $\gamma > {\alpha'}  > \beta $. 

{\sc Case~2}. $\beta$ is obtained by removing two subsequent vertices $v_i\to v_{i+1}$ in $\gamma$ where $0\leq i \leq n+1$.

Hence the critical part is to verify that the assertion is followed for Case~2.
Let
\begin{eqnarray*}
\gamma=v_0v_1\cdots  v_{n+2}
\end{eqnarray*}
and
\begin{eqnarray*}
\beta=v_0\cdots v_{i-1}v_{i+2}\cdots v_{n+2}.
\end{eqnarray*}
Suppose $G$ is  transitive. Then
\begin{eqnarray*}
v_0\cdots v_{i-1}v_{i+1}v_{i+2}\cdots v_{n+2}
\end{eqnarray*}
and
\begin{eqnarray*}
v_0\cdots v_{i-1}v_i v_{i+2}\cdots v_{n+2}
\end{eqnarray*}
are both allowed elementary paths on $G$ which can be denoted as $\alpha$ and $\alpha'$ respectively. Since $v_i\not=v_{i+1}$, it follows that $\alpha\neq\alpha'$.

Hence, summarizing Case~1 and Case~2, we have that if $G$ is transitive, then there exists an allowed elementary $n$-path $\alpha'^{(n+1)}\neq \alpha^{(n+1)}$ such that $\gamma > {\alpha'}  > \beta $.

On the other hand, suppose for any allowed elementary paths $\gamma^{(n+2)}> \alpha^{(n+1)} > \beta^{(n)}$, there exists an allowed elementary $n$-path $\alpha'^{(n+1)}\neq \alpha^{(n+1)}$ such that $\gamma > {\alpha'}  > \beta $. Let $u\to v$ and $v\to w$ be direct edges of $G$. Let $\gamma=uvw$, $\alpha=uv$ and $\beta=u$.  Then $\alpha'=uw$ must be  an allowed elementary path on $G$. Hence, $G$ is  transitive.
\end{proof}

\begin{lemma}\label{le-1.15}
Let $\alpha=v_0\cdots v_n$ ($n>1$) be a critical path on transitive digraph $G$. Let $f$ be a discrete Morse function on $G$. Then there exists at most one $d_j\alpha$ ($0\leq j\leq n$) such that $d_j\alpha$ is non-critical.
\end{lemma}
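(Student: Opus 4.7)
The plan is to assume, toward a contradiction, that two distinct faces $d_i\alpha$ and $d_j\alpha$ ($i<j$) are simultaneously non-critical, to pin down the very rigid form of their witnessing extensions $\eta_i>d_i\alpha$ and $\eta_j>d_j\alpha$, and then to exhibit two distinct $1$-extensions of a common vertex $v_p$ violating Definition~\ref{def-1.1}(i).

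First, in a transitive digraph every face $d_k\alpha$ is automatically an allowed elementary path, because the missing consecutive edge $v_{k-1}\to v_{k+1}$ is supplied by transitivity. Hence, if some $v_k\in V(\alpha)$ had $f(v_k)=0$, then $d_k\alpha<\alpha$ would satisfy $f(d_k\alpha)=f(\alpha)$, contradicting criticality of $\alpha$. So no vertex of $\alpha$ has zero value, which rules out condition (i)'' for every face $d_j\alpha$. Therefore any non-critical $d_j\alpha$ must satisfy (ii)'': there is an allowed $n$-path $\eta_j>d_j\alpha$ with $f(\eta_j)=f(d_j\alpha)$, and this forces $\eta_j$ to arise from $d_j\alpha$ by the insertion of a single zero-value vertex $u_j$.

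Next I would pin down the shape of $\eta_j$. Criticality of $\alpha$ prevents any zero-value vertex from being inserted into $\alpha$ to yield an allowed $(n+1)$-path. A position-by-position check on where $u_j$ may sit in $d_j\alpha$ shows that every choice other than ``in the slot left by $v_j$'' immediately provides such a forbidden insertion into $\alpha$, because the edge conditions making $\eta_j$ allowed are exactly those needed to insert $u_j$ into $\alpha$. Hence
\[
\eta_j=v_0\cdots v_{j-1}\,u_j\,v_{j+1}\cdots v_n,\qquad v_{j-1}\to u_j,\ u_j\to v_{j+1},
\]
with the obvious endpoint modifications when $j=0$ or $j=n$. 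The same criticality argument further forces $u_j$ to be non-adjacent to $v_j$, and transitivity of $G$ propagates the adjacencies to $v_k\to u_j$ for all $k<j$ and $u_j\to v_k$ for all $k>j$.

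For the contradiction, suppose both $d_i\alpha$ and $d_j\alpha$ are non-critical with $i<j$; this produces zero-value vertices $u_i,u_j$, necessarily distinct because $u_i\to v_j$ by transitivity while $u_j$ is not adjacent to $v_j$. If $j\ge i+2$, pick any $p$ with $i<p<j$: then $u_i\to v_p$ and $v_p\to u_j$, so $u_iv_p$ and $v_pu_j$ are two distinct allowed $1$-paths strictly dominating $v_p$ with $f$-value equal to $f(v_p)$, violating Definition~\ref{def-1.1}(i) at $v_p$. If $j=i+1$, I would instead choose $p=j+1$ (when $j<n$) or $p=i-1$ (when $i>0$); since $n>1$, at least one is available. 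For example, when $j<n$, $u_i\to v_{j+1}$ by transitivity through $v_j$ and $u_j\to v_{j+1}$ directly, so $u_iv_{j+1}$ and $u_jv_{j+1}$ are two distinct allowed $1$-paths above $v_{j+1}$ with $f$-value $f(v_{j+1})$, again contradicting Definition~\ref{def-1.1}(i). The main obstacle is really only the subcase $j=i+1$: the naive common vertex $v_j$ is precisely the one to which $u_j$ fails to connect, so one must step outside the interval $[v_i,v_j]$ and exploit transitivity to produce the second $1$-extension.
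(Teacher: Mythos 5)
Your proof is correct and follows essentially the same two-step strategy as the paper's: first pin down that the witnessing extension of a non-critical face $d_j\alpha$ must insert a zero-valued vertex $u_j$ exactly into the slot vacated by $v_j$, and then derive a contradiction from two such faces by exhibiting a common vertex with two distinct zero-valued $1$-extensions, violating Definition~\ref{def-1.1}(i). Your second step is slightly streamlined — you obtain $u_i\neq u_j$ from the non-adjacency of $u_j$ to $v_j$ rather than via the directed-loop argument, and your case split on $j\ge i+2$ versus $j=i+1$ replaces the paper's appeals to Lemma~\ref{le-1.14} and Lemma~\ref{le-1.13} in its Step~2 — but the underlying mechanism is the same.
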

\begin{proof}
Since $G$ is  transitive and  $\alpha$ is  critical, it follows that $d_i\alpha$ is allowed on $G$ and $f(v_i)>0$ for any $0\leq i\leq n$. Suppose $\beta=d_j\alpha$ is non-critical for some $0\leq j\leq n$. Then there exists an unique vertex $u\in V(G)$ with $f(u)=0$ such that
\begin{eqnarray*}
\alpha'=v_0\cdots v_{j-1} \hat{v}_j v_{j+1}\cdots v_{k} u v_{k+1}\cdots v_n
\end{eqnarray*}
is an allowed elementary $n$-path on $G$, $\alpha'>\beta$ and $f(\alpha')=f(\beta)$.

{\bf Step~1}. We assert that
\begin{eqnarray}\label{eq-1a}
\alpha'=v_0\cdots v_{j-1}u v_{j+1}\cdots v_n.
\end{eqnarray}
Suppose to the contrary, either
\begin{eqnarray*}
\alpha'=v_0\cdots v_k u v_{k+1}\cdots v_{j-1}v_{j+1}\cdots v_n
\end{eqnarray*}
or
\begin{eqnarray*}
\alpha'=v_0\cdots v_{j-1}v_{j+1}\cdots v_k u v_{k+1}\cdots v_n.
\end{eqnarray*}
Without loss of generality,
\begin{eqnarray*}
\alpha'=v_0\cdots v_{j-1}v_{j+1}\cdots v_k u v_{k+1}\cdots v_n.
\end{eqnarray*}
Then
\begin{eqnarray*}
\gamma=v_0\cdots v_j\cdots v_{k} u v_{k+1}\cdots v_n.
\end{eqnarray*}
is an allowed elementary $(n+1)$-path on $G$ such that  $ \gamma>\alpha$ and $f(\gamma)=f(\alpha)$. This  contradicts that $\alpha$ is  critical. Hence the assertion is proved.

 {\bf Step~2}. We will prove that for any $0\leq i\not=j\leq n$, $d_i\alpha$ is  critical. Suppose to the contrary, there exists an allowed elementary path $\beta'=d_i\alpha$ ($i\not=j$) which is non-critical. Then by a discussion similar to the above, there exists an unique vertex $w\in V(G)$ with $f(w)=0$ and
 \begin{eqnarray}\label{eq-1b}
\alpha''= v_0\cdots v_{i-1}w v_{i+1}\cdots v_n
 \end{eqnarray}
 is an allowed elementary $n$-path on $G$ such that $\alpha''>\beta'$ and $f(\alpha'')=f(\beta')$.
 Without of loss of generality, $0\leq i<j\leq n$.

 Firstly, we assert that $u\not=w$. Suppose to the contrary, $u=w$. Consider the following two cases.

  {\sc Case~1}. $j=i+1$. Then $i=j-1$. By (\ref{eq-1a}) and (\ref{eq-1b}),  $\gamma=v_0\cdots v_{i-1}v_{i}wv_{i+1}\cdots v_n$ is an allowed elementary path on $G$ such that   $\gamma>\alpha$ and $f(\gamma)=f(\alpha)$. This  contradicts that $\alpha$ is critical.

 {\sc Case~2}. $j>i+1$. Then $j-1>i$. Hence, $wv_{i+1}\cdots v_{j-1}w$ (or $uv_{i+1}\cdots v_{j-1}u)$ is a directed loop with $f(w)=0$ (or $f(u)=0$) which contradicts Lemma~\ref{le-1.14}.

 Combining Case~1 and Case~2, $u\not=w$.

Secondly, according to the value of $i$, we divide it into two cases to complete the proof of Step~2.

{\sc Case~3}.  $i\geq 1$. Since $G$ is transitive and $j>i$, $v_{i-1}\to v_{j-1}$ is a directed edge of $G$. Thus, $v_{i-1}\to u$ is also a directed edge of $G$. Moreover, since $u\not=w$, it follows that $v_{i-1}\to u$ and $v_{i-1}\to w$ are two distinct directed edges of $G$ with $f(u)=f(w)=0$. This contradicts that $f$ is a discrete Morse function on $G$.

{\sc Case~4}.  $i=0$. There are two subcases.

{\sc Subcase~4.1}. $j<n$. Since $G$ is transitive and $j>i$, $v_{i+1}\to v_{j+1}$ is a directed edge of $G$. Thus, $w\to v_{j+1}$ is also a directed edge of $G$. Moreover, since $u\not=w$, it follows that $u\to v_{j+1}$ and $w\to v_{j+1}$ are two distinct directed edges of  $G$ with $f(u)=f(w)=0$. This contradicts that $f$ is a discrete Morse function on $G$.

{\sc Subcase~4.2}. $j=n$. Then $wv_{1}\cdots v_{n-1} u$ is an allowed path on $G$  with $f(u)=f(w)=0$. Since $u\not=w$, there are two distinct zero-points in the path $wv_{i+1}\cdots v_{n-1} u$ which contradicts Lemma~\ref{le-1.13}.

By Step~1 and Step~2, the lemma follows.

\end{proof}

\begin{remark}\label{re-1}
Note that in the proof of Lemma~\ref{le-1.15}, the condition $n>1$ ensures that $w\to v_{i+1}$ and $v_{j-1}\to u$ are directed edges of $G$.
If $n=1$,  then Lemma~\ref{le-1.15} may not hold. For example, let $G$ is a digraph with $V(G)=\{v_0,v_1,v_2,v_3\}$ and
\begin{eqnarray*}
E(G)=\{v_0\to v_1, v_2\to v_1, v_0\to v_3\}.
\end{eqnarray*}
Then $G$ is a  transitive digraph. Let $f$ be a function on $G$ with $f(v_2)=f(v_3)=0$ and $f(v_0)>0, f(v_1)>0$.  It is easy to verify that $f$ is  a discrete Morse function and $\alpha=v_0v_1$ is critical. Let $\beta_1=v_0<\alpha$ and $\beta_2=v_1<\alpha$. Then $v_0v_3>\beta_1$ and $f(v_0v_3)=f(\beta_1)$, and  $v_2v_1>\beta_2$ and $f(v_2v_1)=f(\beta_2)$. Hence, $\beta_1$ and $\beta_2$ are both non-critical.
\end{remark}

\smallskip

 Denote $P_{*}^{\overline{\Phi}}(G)$ as the sub-chain complex of $P_*(G)$ consisting of all $\overline{\Phi}$-invariant chains where $G$ is a transitive digraph.

\begin{lemma}\label{le-26}
Let $G$ be a transitive digraph and $f$ a discrete Morse function on $G$. If $\alpha=uv$ is critical and either $\beta_0=u$ or $\beta_1=v$ is not critical. Then $\alpha\notin P_{1}^{\overline{\Phi}}(G)$ and $\big(\alpha+\overline{V}\partial(\alpha)\big)\in P_{1}^{\overline{\Phi}}(G)$.
\end{lemma}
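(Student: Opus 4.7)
The plan is to unpack the definitions of $\overline{\Phi}$ and $\overline{V}$ and analyze both assertions case by case, according to which of $u$ and $v$ fails to be critical. Throughout I will use two simple observations: first, if $\alpha = uv$ is critical then $f(u), f(v) > 0$ (else some $0$-face of $\alpha$ shares its value, violating Definition~\ref{def-1.2}(i)'); second, no edge of $G$ can have both endpoints at value $0$, since then the two $0$-faces of that edge would share its value and violate Definition~\ref{def-1.1}(ii).

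For the first assertion, $\alpha$ critical gives $\overline{V}(\alpha) = 0$, hence $\overline{\Phi}(\alpha) - \alpha = \overline{V}\partial(\alpha) = \overline{V}(v) - \overline{V}(u)$. When only one of $u, v$ is non-critical, the other side vanishes and the difference is a single nonzero term. When both are non-critical, $\overline{V}(u) = \pm\gamma_u$ and $\overline{V}(v) = \pm\gamma_v$ for edges $\gamma_u \ni u$ and $\gamma_v \ni v$, each containing a vertex of zero value; since $f(u), f(v) > 0$, these edges cannot share vertex sets, hence are distinct basis elements, so their signed combination is nonzero. Thus $\overline{\Phi}(\alpha) \neq \alpha$ and $\alpha \notin P_1^{\overline{\Phi}}(G)$.

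For the second assertion, set $y = \alpha + \overline{V}\partial(\alpha)$, so that $\overline{\Phi}(y) = y + \partial\overline{V}(y) + \overline{V}\partial(y)$. Using $\overline{V}(\alpha) = 0$ and $\overline{V}^2 = 0$ gives $\overline{V}(y) = 0$ and hence $\partial\overline{V}(y) = 0$; here $\overline{V}^2 = 0$ follows from Lemma~\ref{le-1.24}, because if $\overline{V}(\sigma) = \pm\gamma$ then (ii)'' holds for $\gamma$ via $\sigma$, blocking (i)'' and forcing $\overline{V}(\gamma) = 0$. Writing $L = \partial\overline{V} + \overline{V}\partial$ and using $\partial^2 = 0$, I then compute $\partial(y) = (\mathrm{Id}+L)(\partial \alpha) = \overline{\Phi}(v) - \overline{\Phi}(u)$. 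A brief sign check shows $\overline{\Phi}(u) = u$ if $u$ is critical and $\overline{\Phi}(u) = w_u$ (the zero-value vertex of the edge $\gamma_u$ paired with $u$) if $u$ is non-critical, independently of the orientation of $\gamma_u$; similarly for $v$. The key point is that each such $w_u$ is itself critical, for otherwise a second zero vertex would be adjacent to $w_u$ and violate the observation above. Consequently $\partial(y)$ is a combination of critical $0$-paths, so $\overline{V}\partial(y) = 0$, giving $\overline{\Phi}(y) = y$, i.e.\ $y \in P_1^{\overline{\Phi}}(G)$.

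The main obstacle is the case when both $u$ and $v$ are non-critical (the scenario in Remark~\ref{re-1}), since otherwise at least one of $\overline{V}(u), \overline{V}(v)$ automatically vanishes. Both halves of the proof in this case hinge on the ``no two adjacent zero vertices'' consequence of Definition~\ref{def-1.1}(ii): it rules out the coincidence $\gamma_u = \gamma_v$ needed for the first assertion, and it forces the new zero vertices $w_u, w_v$ introduced by $\overline{V}\partial(\alpha)$ to be critical as needed for the second. Once these points are in place, the rest is routine sign tracking.
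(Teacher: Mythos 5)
Your proof is correct, but it takes a genuinely different route from the paper's. The paper argues by explicit computation: it first uses the criticality of $\alpha=uv$ to pin down the orientation of the edges paired with the non-critical endpoints (if the edge paired with $v$ were $vw'$, then $uvw'>\alpha$ would have the same $f$-value as $\alpha$, contradicting criticality, so the pairings must be of the form $uw$ and $w'v$), shows $w\neq w'$, writes $\alpha+\overline{V}\partial(\alpha)$ out explicitly as $uv-uw$ or $uv-w'v-uw$, and verifies $\overline{\Phi}$-invariance term by term. You avoid the orientation analysis entirely: for the first assertion you only need that the two paired edges are distinct basis elements (which indeed follows from $f(u),f(v)>0$ and $u\neq v$), and for the second you use the operator identities $\overline{V}\circ\overline{V}=0$ and $\partial\big(\alpha+\overline{V}\partial(\alpha)\big)=\overline{\Phi}(\partial\alpha)$ together with the observation that $\overline{\Phi}$ sends every vertex to a critical vertex, because the zero-valued vertices $w_u,w_v$ cannot themselves be paired upward (no edge carries two zero-valued vertices, by Lemma~\ref{le-1.13}). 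That last fact is exactly what the paper uses implicitly when it evaluates $\overline{V}(w-u)$ and $\overline{V}(w'-w)$ to a single term. Your version is shorter and needs neither the orientation argument nor $w\neq w'$; the paper's version has the side benefit of producing the explicit formula $\overline{\Phi}(\alpha)=uv-w'v-uw$, which is reused later (e.g.\ in the $n=1$ part of Step~2 of Proposition~\ref{prop-11}). One cosmetic slip: in your justification of $\overline{V}\circ\overline{V}=0$ the labels (i)$''$ and (ii)$''$ are interchanged --- the face $\sigma<\gamma$ with $f(\sigma)=f(\gamma)$ witnesses (i)$''$ for $\gamma$, and Lemma~\ref{le-1.24} then rules out (ii)$''$, which is what forces $\overline{V}(\gamma)=0$ --- but the substance is right.
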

\begin{proof}
Since $\alpha$ is critical, $f(u)>0$ and $f(v)>0$. We divide the proof into the following cases.

{\sc Case~1}. Only one of $\beta_0$ and $\beta_1$ is not critical. Without loss of generality, $\beta_0$ is not critical and $\beta_1$ is critical. Then there exists an unique vertex $w\in V(G)$ such that $f(w)=0$ and  f($uw)=f(u)$. Hence
\begin{eqnarray*}
\alpha+\overline{V}\partial(\alpha)&=&uv+\overline{V}(v-u)\\
&=&uv-uw
\end{eqnarray*}
and
\begin{eqnarray*}
\overline{\Phi}(\alpha+\overline{V}\partial(\alpha))&=&(\mathrm{Id}+\partial\overline{V}+\overline{V}\partial)(uv-uw)\\
&=&(\mathrm{Id}+\partial\overline{V}+\overline{V}\partial)(uv)-(\mathrm{Id}+\partial\overline{V}+\overline{V}\partial)(uw)\\
&=&\big(uv+(\overline{V}\partial)(uv)\big)-\big(uw+(\overline{V}\partial)(uw)\big)\\
&=&\big(uv+\overline{V}(v-u)\big)-\big(uw+\overline{V}(w-u)\big)\\
&=&(uv-uw)\in P_{1}^{\overline{\Phi}}(G).
\end{eqnarray*}

{\sc Case~2}. Both $\beta_0=u$ and $\beta_1=v$ are not critical. Then there exist $\alpha_0>\beta_0$ and $\alpha_1>\beta_1$ such that
$f(\alpha_0)=f(\beta_0)$ and $f(\alpha_1)=f(\beta_1)$ where $\alpha_0$ and $\alpha_1$ are  allowed elementary paths on $G$. By Remark~\ref{re-1}, we assert that the direction of $\alpha_0$ and $\alpha_1$ are not consistent with $\alpha$. Suppose to the contrary, at least one of $\alpha_0$ and $\alpha_1$ is consistent with $\alpha$. Without loss of generality, $\alpha_0=uw$, $\alpha_1=vw'$ and $f(w)=f(w')=0$. Let $\gamma=uvw'$. Then $\gamma>\alpha$ and $f(\gamma)=f(\alpha)$ which contradicts $\alpha$ is critical. Hence, $\alpha_0$ and $\alpha_1$ can be written as $uw$ and $w'v$ respectively with $f(w)=f(w')=0$.

Moreover, we can prove that $w\not=w'$. Suppose to the contrary, $w=w'$. Let $\gamma=uwv$. Then $\alpha<\gamma$ and  $f(\alpha)=f(\gamma)$ which contradicts that $\alpha$ is critical.

Therefore,
\begin{eqnarray*}
\overline{\Phi}(\alpha)&=&(\mathrm{Id}+\partial\overline{V}+\overline{V}\partial)(\alpha)\\
&=&\alpha+\overline{V}\partial(\alpha)\\
&=&uv+\overline{V}(v-u)\\
&=&uv-w'v-uw
\end{eqnarray*}
and
\begin{eqnarray*}
\overline{\Phi}(\alpha+\overline{V}\partial(\alpha))&=&\overline{\Phi}(uv-w'v-uw)\\
&=&(\mathrm{Id}+\partial\overline{V}+\overline{V}\partial)(uv-w'v-uw)\\
&=&(uv-w'v-uw)+\overline{V}\partial(uv-w'v-uw)\\
&=&(uv-w'v-uw)+\overline{V}(w'-w)\\
&=&uv-w'v-uw\\
&=&\alpha+\overline{V}\partial(\alpha).
\end{eqnarray*}
The lemma is proved.
\end{proof}
\bigskip
\begin{lemma}\label{le-l1}
Let $G$ be a transitive digraph and $f$ a discrete Morse function on $G$. Then
\begin{eqnarray*}
\overline{\Phi}(\alpha)=0
\end{eqnarray*}
for any $\alpha\in P(G)$ where $\alpha$ is not critical.
\end{lemma}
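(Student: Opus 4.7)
The plan is to invoke Lemma~\ref{le-1.24} to split the hypothesis ``$\alpha$ is not critical'' into the mutually exclusive sub-cases (i)'' and (ii)'', and verify $\overline{\Phi}(\alpha)=0$ in each.

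In Case (i)'', write $\beta = d_{k_0}\alpha$ for the unique face with $f(\beta)=f(\alpha)$ (forcing $f(v_{k_0})=0$); since (ii)'' fails, $\overline{V}(\alpha)=0$, so $\overline{\Phi}(\alpha)=\alpha+\overline{V}\partial(\alpha)=\alpha+\sum_k(-1)^k\overline{V}(d_k\alpha)$. The $k_0$-th summand equals $-\alpha$ directly from the definition $\overline{V}(\beta)=-\langle\partial\alpha,\beta\rangle\alpha$, canceling the leading $\alpha$. The crux is the sub-claim that every other face $d_k\alpha$ ($k\neq k_0$) is itself \emph{not} upward-paired, i.e.\ $\overline{V}(d_k\alpha)=0$. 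I would prove this by contradiction: if $d_k\alpha$ admitted an upward partner $\delta$ obtained by inserting a zero-vertex $w$ into $d_k\alpha$, then since $v_{k_0}$ already is a zero-vertex of $d_k\alpha$, Lemma~\ref{le-1.13} forces $w=v_{k_0}$; one then assembles an allowed elementary $(n+1)$-path $\gamma>\alpha$ by inserting $v_{k_0}$ into $\alpha$ at an auxiliary position, supplying the missing edges through the transitivity of $G$. The resulting $\gamma$ satisfies $f(\gamma)=f(\alpha)$, which would make (ii)'' hold for $\alpha$, contradicting Lemma~\ref{le-1.24}.

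Case (ii)'' is dual: the unique upward pair $\gamma > \alpha$ arises by inserting a zero-vertex $w$ into $\alpha$ at some position $j$, and $\overline{V}(\alpha)=-c\gamma$ with $c=\langle\partial\gamma,\alpha\rangle=(-1)^j$. Thus $\partial\overline{V}(\alpha)=-c\partial\gamma$, whose $\alpha$-coefficient $-c^2=-1$ cancels the leading $\alpha$ in $\overline{\Phi}(\alpha)=\alpha-c\partial\gamma+\overline{V}\partial\alpha$. For each remaining face $d_i\gamma$ ($i\neq j$), rewriting $d_i\gamma$ as ``$d_{k(i)}\alpha$ with $w$ inserted at an appropriate position'' exhibits $d_{k(i)}\alpha$ as upward-paired with $d_i\gamma$: allowedness of $d_i\gamma$ follows from transitivity of $G$, $f(d_i\gamma)=f(d_{k(i)}\alpha)$ because $w$ contributes $0$, and uniqueness of the pair comes from $f$ being Morse. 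A sign-chase using the simplicial identity $d_k d_i = d_{i-1}d_k$ for $k<i$ then yields $\overline{V}\partial(\alpha)=c\partial\gamma-\alpha$, so $\overline{\Phi}(\alpha)=0$.

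The main obstacles are (a) the explicit construction and allowedness verification of the auxiliary $(n+1)$-path $\gamma$ in Case (i)'', which demands careful positional case analysis (the inserted vertex may land before, at, or after the removed $v_k$) combined with transitivity to supply edges such as $\mu_{j-1}\to v_{k_0}$ or $v_{k_0}\to\mu_j$; and (b) the sign bookkeeping in Case (ii)'', where one must carefully track how the insertion of $w$ shifts indices under the face maps $d_i$. Both Lemmas~\ref{le-1.13} and~\ref{le-1.14} are essential throughout to rule out pathological configurations involving multiple zero-vertices or directed loops through zero-vertices.
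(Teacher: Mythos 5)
Your proposal takes essentially the same route as the paper's proof: Lemma~\ref{le-1.24} splits a non-critical $\alpha$ into the cases (i)'' and (ii)''; in the first, $\overline{V}\partial(\alpha)=-\alpha$ cancels $\alpha$ (where the paper tacitly uses, and you explicitly verify, that $\overline{V}$ vanishes on the other faces $d_k\alpha$), and in the second, the faces $d_i\gamma$ (allowed by transitivity) pair with the corresponding faces of $\alpha$ so that $\overline{V}\partial(\alpha)+\partial\overline{V}(\alpha)=-\alpha$. One small simplification for your case (i)'': Lemma~\ref{le-1.13} alone already rules out any upward partner of $d_k\alpha$ for $k\neq k_0$, since such a partner would contain two indices carrying a zero-valued vertex whether or not the inserted vertex equals $v_{k_0}$, so the auxiliary $(n+1)$-path $\gamma>\alpha$ you propose to assemble there is unnecessary.
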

\begin{proof}
By Lemma~\ref{le-1.24},  there are two cases.

{\sc Case~1}. 
There exists an unique allowed elementary path $\beta'$ on $G$ such that $\beta'<\alpha$ and $f(\beta')=f(\alpha)$. Then
\begin{eqnarray*}
\overline{V}(\beta')=-\langle\partial \alpha, \beta'\rangle \alpha
\end{eqnarray*}
and by Lemma~\ref{le-1.14}, $\overline{V}(\alpha)=0$.

Hence,
\begin{eqnarray*}
\overline{V}\partial(\alpha)&=&\overline{V}\Big(\sum\limits_{\beta<\alpha}\langle \partial\alpha, \beta\rangle \beta\Big)\\
&=&\langle\partial\alpha,\beta'\rangle\overline{V}(\beta')\\
&=&-{\langle\partial\alpha,\beta'\rangle}^2\alpha\\
&=&-\alpha
\end{eqnarray*}
and
\begin{eqnarray*}
\overline{\Phi}(\alpha)&=&\alpha-\alpha\\
&=&0.
\end{eqnarray*}

{\sc Case~2}. There exists an unique allowed elementary path $\gamma$ on $G$ such that $\gamma>\alpha$ and $f(\gamma)=f(\alpha)$. Then $\overline{V}(\alpha)=-\langle\partial\gamma,\alpha\rangle\gamma\not=0$. 

Let
\begin{eqnarray*}
\alpha=v_0\cdots v_n,\quad \gamma=v_0\cdots v_j u v_{j+1}\cdots v_n
\end{eqnarray*}
where $f(u)=0$. Then
\begin{eqnarray}
(\overline{V}\partial+\partial\overline{V})(\alpha)&=&\sum\limits_{i=0}^{n}\overline{V}\big((-1)^{i}d_i\alpha\big)-\langle\partial\gamma,\alpha\rangle \partial\gamma\nonumber\\
&=&\sum\limits_{i=0}^{n}(-1)^{i}\overline{V}\big(d_i\alpha\big)-(-1)^{j+1} \partial\gamma\label{eq-z4}.
\end{eqnarray}
Consider the following subcases.

{\sc Subcase~2.1}. $0\leq i\leq j$. Then
\begin{eqnarray*}
\overline{V}(d_i\alpha)&=&-\langle\partial\alpha',d_i\alpha\rangle\alpha'\\
&=&-(-1)^{j}\alpha'
\end{eqnarray*}
where
\begin{eqnarray*}
\alpha'=v_0\cdots\hat{v}_i\cdots v_j u v_{j+1}\cdots v_n.
\end{eqnarray*}
Hence, the term containing  $\alpha'$ in (\ref{eq-z4}) is
\begin{eqnarray*}
\overline{V}\big((-1)^{i}d_i(\alpha)\big)-(-1)^{j+1}\langle\partial\gamma,\alpha'\rangle\alpha'&=&-(-1)^{i+j}\alpha'-(-1)^{j+1}(-1)^{i}\alpha'\\
&=&(-1)^{i+j+1}\alpha'-(-1)^{i+j+1}\alpha'\\
&=&0.
\end{eqnarray*}

{\sc Subcase~2.2}. ${j+1}\leq i\leq n$. Then
\begin{eqnarray*}
\overline{V}(d_i\alpha)&=&-\langle\partial\alpha'',d_i\alpha\rangle\alpha''\\
&=&-(-1)^{j+1}\alpha''
\end{eqnarray*}
where
\begin{eqnarray*}
\alpha''=v_0\cdots v_j u v_{j+1}\cdots \hat{v}_i\cdots v_n.
\end{eqnarray*}
Hence, the term containing  $\alpha''$ in (\ref{eq-z4}) is
\begin{eqnarray*}
\overline{V}((-1)^{i}d_i(\alpha))-(-1)^{j+1}\langle\partial\gamma,\alpha''\rangle\alpha''&=&-(-1)^{i}(-1)^{j+1}\alpha''-(-1)^{j+1}(-1)^{i+1}\alpha''\\
&=&(-1)^{i+j+2}\alpha''-(-1)^{i+j+2}\alpha''\\
&=&0.
\end{eqnarray*}

Combing Subcase 2.1 and Subcase 2.2, we have that all terms of $\overline{V}\partial (\alpha)$ are cancelled out with terms of $\partial \overline{V}(\alpha)$ and there is only one item left in $\overline{V}\partial (\alpha)+\partial \overline{V}(\alpha)$. Specifically,
\begin{eqnarray*}
\overline{\Phi}(\alpha)&=&\alpha+\overline{V}\partial (\alpha)+\partial \overline{V}(\alpha)\\
&=&\alpha+(-(-1)^{j+1}(-1)^{j+1})\alpha\\
&=&0.
\end{eqnarray*}
Therefore, the assertion follows.
\end{proof}
\smallskip
Next, we give the characterization of the $\overline{\Phi}$-invariant set of transitive digraphs.

\begin{proposition}\label{prop-11}
Let $G$ be a transitive digraph and $f$ a discrete Morse function on $G$. Then
\begin{eqnarray*}\label{eq-17.2}
P_*^{\overline{\Phi}}({G})=R\big(\alpha+\overline{V}\partial(\alpha)\big)
\end{eqnarray*}
where $\alpha$ is critical in $G$. 
\end{proposition}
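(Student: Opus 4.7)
The plan is to prove the two set-theoretic inclusions separately, with Lemma~\ref{le-l1} playing the central role. The basic observation is that for any critical $\alpha$, condition (ii)' of Definition~\ref{def-1.2} forbids the existence of a $\gamma>\alpha$ with $f(\gamma)=f(\alpha)$, so $\overline{V}(\alpha)=0$ by the definition of $\mathrm{grad}f$, and consequently
\[
\overline{\Phi}(\alpha)=\alpha+\partial\overline{V}(\alpha)+\overline{V}\partial(\alpha)=\alpha+\overline{V}\partial(\alpha).
\]
For the forward inclusion $R(\alpha+\overline{V}\partial(\alpha))\subseteq P_*^{\overline{\Phi}}(G)$, it therefore suffices to show that $\overline{\Phi}$ fixes $\alpha+\overline{V}\partial(\alpha)$, which by linearity reduces to verifying $\overline{\Phi}(\overline{V}\partial(\alpha))=0$. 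I would deduce this from the claim that $\overline{V}\partial(\alpha)$ is a linear combination of non-critical paths, so that Lemma~\ref{le-l1} applies term by term.

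The non-critical support of $\overline{V}\partial(\alpha)$ is where Lemma~\ref{le-1.15} enters. Writing $\partial\alpha=\sum_i(-1)^i d_i\alpha$, each face $d_i\alpha$ is allowed by transitivity of $G$; by Lemma~\ref{le-1.15}, at most one face $d_j\alpha$ is non-critical, and on any critical face $\overline{V}$ vanishes. For the unique non-critical face $d_j\alpha$, the defect must be of type (ii)'' since (i)'' would contradict criticality of $\alpha$, so $\overline{V}(d_j\alpha)=-\langle\partial\alpha',d_j\alpha\rangle\,\alpha'$ for the unique allowed $\alpha'>d_j\alpha$ with $f(\alpha')=f(d_j\alpha)$. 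This $\alpha'$ then has $d_j\alpha<\alpha'$ with equal $f$-value, so $\alpha'$ itself satisfies (i)'' and is non-critical, which is exactly what is needed. For the reverse inclusion, take $x\in P_*^{\overline{\Phi}}(G)$ and decompose $x=x_c+x_n$ with $x_c=\sum_{\alpha\text{ critical}}c_\alpha\alpha$ and $x_n$ supported on non-critical paths. Lemma~\ref{le-l1} gives $\overline{\Phi}(x_n)=0$, hence
\[
x=\overline{\Phi}(x)=\overline{\Phi}(x_c)=\sum_{\alpha\text{ critical}}c_\alpha\bigl(\alpha+\overline{V}\partial(\alpha)\bigr),
\]
which realises $x$ as an element of $R(\alpha+\overline{V}\partial(\alpha))$.

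The step I expect to be the main obstacle is the structural claim that $\overline{V}\partial(\alpha)$ is supported on non-critical paths whenever $\alpha$ is critical; this is not immediate from the definitions and relies on combining Lemma~\ref{le-1.15} (uniqueness of the non-critical face) with the observation that $\overline{V}$ converts such a non-critical face into a path inheriting condition (i)''. Once this support property is established, the computation reducing $\overline{\Phi}(\alpha+\overline{V}\partial(\alpha))$ to $\alpha+\overline{V}\partial(\alpha)$ and the decomposition argument for an arbitrary $\overline{\Phi}$-invariant element are routine linear-algebraic manipulations.
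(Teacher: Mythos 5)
Your proof is correct, and the reverse inclusion in particular takes a genuinely different and shorter route than the paper's. The paper establishes $P_*^{\overline{\Phi}}(G)\subseteq R\big(\alpha+\overline{V}\partial(\alpha)\big)$ by an explicit cancellation analysis: it splits an invariant chain $x=\sum a_i\alpha_i$ into three cases (all $\alpha_i$ non-critical, all critical, mixed) and, in the latter two, tracks exactly which terms of $\overline{V}\partial(\alpha_i)$ must cancel against which others, with separate sub-arguments for $n>1$ (via Lemma~\ref{le-1.15}) and $n=1$ (via Lemma~\ref{le-26}). You instead write $x=x_c+x_n$, kill $x_n$ with Lemma~\ref{le-l1}, and read off $x=\overline{\Phi}(x)=\overline{\Phi}(x_c)=\sum c_\alpha\big(\alpha+\overline{V}\partial(\alpha)\big)$ in one line; this is valid because invariance means $\overline{\Phi}(x)=x$ and $\overline{\Phi}(\alpha)=\alpha+\overline{V}\partial(\alpha)$ for critical $\alpha$, and it entirely bypasses the bookkeeping. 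Your forward inclusion is likewise leaner: the paper again case-splits on $n$ and computes $\overline{\Phi}(\alpha-\alpha')$ by hand, whereas you reduce to $\overline{\Phi}\big(\overline{V}\partial(\alpha)\big)=0$ and invoke Lemma~\ref{le-l1} term by term. Note that for this you do not even need Lemma~\ref{le-1.15} (which in any case only applies for $n>1$; by Remark~\ref{re-1} a critical $1$-path can have two non-critical faces): whatever the number of non-critical faces, any nonzero $\overline{V}(\beta)=-\langle\partial\gamma,\beta\rangle\gamma$ has $\gamma>\beta$ with $f(\gamma)=f(\beta)$, so $\gamma$ satisfies (i)$''$ and is automatically non-critical, which is all your argument uses. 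What your approach buys is brevity and robustness (no case split on $n$, no tracking of signs and coefficients); what the paper's buys is an explicit description of how non-critical terms pair with critical ones inside an invariant chain, which it reuses later (e.g.\ in the surjectivity argument of Theorem~\ref{th-aa}).
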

\begin{proof}
We divide the proof into the following two steps.

{\bf Step~1}.  We prove that $R\big(\alpha+\overline{V}\partial(\alpha)\big)\subseteq P_*^{\overline{\Phi}}({G})$.

Let $\alpha$ is a critical $n$-path on $G$. Then $\overline{V}(\alpha)=0$ and
\begin{eqnarray*}\label{eq-24}
\overline{\Phi}(\alpha)=\alpha+\overline{V} \partial(\alpha).
\end{eqnarray*}
Consider the following cases according to the value of $\overline{V} \partial(\alpha)$.

{\sc Case~1}. $\beta$ is critical for any $\beta<\alpha$. Then $\overline{V} \partial(\alpha)=0$. Hence,
\begin{eqnarray*}
\overline{\Phi}\big(\alpha+\overline{V}\partial(\alpha)\big)&=&\overline{\Phi}(\alpha)\\
&=&\alpha+\overline{V}\partial(\alpha)
\end{eqnarray*}
which implies that $\big(\alpha+\overline{V}\partial(\alpha)\big)\in P_*^{\overline{\Phi}}({G})$.

{\sc Case~2}. There exists an allowed elementary path  $\beta'<\alpha$ such that $\beta'$ is not critical. Then $n\geq 1$.

Suppose
\begin{eqnarray*}
\alpha=v_0v_1\cdots v_n
\end{eqnarray*}
and $\beta'=d_i\alpha$. There are two subcases.

{\sc Subcase~2.1}. $n>1$. Then by Lemma~\ref{le-1.15},  $\beta'$ is unique and  $\overline{V}(\beta')=-\langle \partial \alpha',\beta'\rangle \alpha'$ where
\begin{eqnarray*}
\alpha'=v_0\cdots v_{i-1}v'_i v_{i+1}\cdots v_n,\quad v'_i\not=v_i,\quad f(v'_i)=0.
\end{eqnarray*}

Notice that,
\begin{eqnarray*}
\langle \partial \alpha,\beta'\rangle=\langle \partial \alpha',\beta'\rangle.
\end{eqnarray*}
Hence,
\begin{eqnarray*}
\overline{V}\partial(\alpha')&=&\overline{V}(\langle \partial \alpha',\beta'\rangle \beta')\\
&=&-{\langle \partial \alpha',\beta'\rangle}^2 \alpha'\\
&=&-\alpha'
\end{eqnarray*}
and
\begin{eqnarray*}
\overline{\Phi}(\alpha+\overline{V} \partial(\alpha))&=&\overline{\Phi}(\alpha+\langle \partial \alpha,\beta'\rangle \overline{V}(\beta'))\\
&=&\overline{\Phi}(\alpha-\alpha')\\
&=&\alpha-\alpha'+\overline{V}\partial(\alpha-\alpha')\\
&=&\alpha-\alpha'\\
&=&\alpha+\overline{V} \partial(\alpha).
\end{eqnarray*}

{\sc Subcase~2.2}. $n=1$. Then  by  Lemma~\ref{le-26}, we have that
\begin{eqnarray*}
\overline{\Phi}(\alpha+\overline{V}\partial(\alpha))=\alpha+\overline{V}\partial(\alpha).
\end{eqnarray*}

Combining Case~1 and Case~2,  we have that
 \begin{eqnarray*}
 \big(\alpha+\overline{V}\partial(\alpha)\big)\in P_*^{\overline{\Phi}}({G})
 \end{eqnarray*}
for any $\alpha\in \mathrm{Crit}(G)$.

{\bf Step~2}. We prove that $P_*^{\overline{\Phi}}({G})\subseteq R\big(\alpha+\overline{V}\partial(\alpha)\big)$.

Let $x=\sum\limits_{i=1}^{m}a_i\alpha_i\in P_*^{\overline{\Phi}}({G})$ where  $a_i\not=0$ for $1\leq i\leq m$ and $\alpha_1,\cdots,\alpha_m$ are distinct allowed elementary $n$-paths on $G$. Consider the following cases.

 {\sc Case~3}. Each $\alpha_i, 1\leq i\leq m$ is not critical. That is, $x$ is a formal linear combination of non-critical paths on $G$. Then by Lemma~\ref{le-l1}, $\overline{\Phi}(x)=0$ which contradicts $x\in P_*^{\overline{\Phi}}({G})$. Hence, this case does not hold.

 {\sc Case~4}. Each $\alpha_i, 1\leq i\leq m$ is critical. Consider the following  two subcases.

 {\sc Subcase~4.1}.  $\overline{V}\partial(\alpha_i)=0$ for any $\alpha_i$. Then 
 \begin{eqnarray*}
 x=\sum\limits_{i=1}^{m}a_i\alpha_i=\sum\limits_{i=1}^{m}a_i\big(\alpha_i+\overline{V}\partial(\alpha_i)\big)
\end{eqnarray*}
and by Case~1 of Step~1, we have that $\overline{\Phi}(x)=x$.

 {\sc Subcase~4.2}. There exists some $\alpha_i$ such that $\overline{V}\partial(\alpha_i)\not=0$. Since $\overline{V}\partial(\alpha_i)$ can not be cancelled out by any critical path and $\overline{\Phi}(x)=x$, it follows that there  must exist some $\alpha_j, 1\leq j\not=\leq m$  such that $\overline{V}\partial(\alpha_j)\not=0$ and  $\overline{V}\partial(\alpha_i)$ is cancelled out by  $\overline{V}\partial(\alpha_j)$. Specifically,
\begin{itemize}
\item
n>1. By Lemma~\ref{le-1.15}, there exists an unique allowed elementary path $\beta$ on $G$ such that $\beta<\alpha_i$ and $\overline{V}(\beta)\not=0$. Then
\begin{eqnarray*}
\overline{V}\partial(\alpha_i)&=&\overline{V}(\langle\partial\alpha_i,\beta\rangle\beta)\\
&=&-\langle\partial\alpha'_i,\beta\rangle\langle\partial\alpha'_i,\beta\rangle\alpha'_i\\
&=&-\alpha'_i
\end{eqnarray*}
where $\alpha'_i>\beta$ and $f(\alpha'_i)=f(\beta)$.

Similarly,
 \begin{eqnarray*}
 \overline{V}\partial(\alpha_j)&=&\overline{V}(\langle\partial\alpha_j,\beta\rangle\beta')\\
&=&-\langle\partial\alpha_j,\beta'\rangle\langle\partial\alpha'_j,\beta'\rangle\alpha'_j\\
&=&-\alpha'_j
 \end{eqnarray*}
 where $\beta'$ is the unique allowed elementary path on $G$ such that $\beta'<\alpha_j$ and $\overline{V}(\beta')\not=0$, and  $\alpha'_j>\beta'$ and $f(\alpha'_j)=f(\beta')$.

Since $\overline{\Phi}(x)=x$, it follows that $a_i\overline{V}\partial(\alpha_i)+a_j\overline{V}\partial(\alpha_j)=0$. Hence, $\alpha'_i=\alpha'_j$ and $a_i=-a_j$ (In fact, $\beta=\beta'$). Thus,
\begin{eqnarray}
a_i(\alpha_i+\overline{V}\partial(\alpha_i))+a_j(\alpha_j+\overline{V}\partial(\alpha_j))&=&a_i\alpha_i-a_i\alpha'_i+a_j\alpha_j-a_j\alpha'_j\nonumber\\
&=&a_i\alpha_i+a_j\alpha_j\label{eq-l2}.
\end{eqnarray}

Therefore, by Subcase~4.1 of Step~2 and (\ref{eq-l2}),
\begin{eqnarray*}
x&=&\overline{\Phi}(x)\nonumber\\
&=&\overline{\Phi}(\sum\limits_{i=1}^{m}a_i\alpha_i)\nonumber\\
&=&\overline{\Phi}\Big(\sum\limits_{i=1}^{m}a_i\big(\alpha_i+\overline{V}\partial(\alpha_i)\big)\Big).\label{eq-l3}
\end{eqnarray*}
By Step~1,
\begin{eqnarray*}
\overline{\Phi}\Big(\sum\limits_{i=1}^{m}a_i\big(\alpha_i+\overline{V}\partial(\alpha_i)\big)\Big)&=&\sum\limits_{i=1}^{m}a_i\big(\alpha_i+\overline{V}\partial(\alpha_i)\big)
\end{eqnarray*}
which implies that
$x$ can be written as a formal linear combination of $\alpha+\overline{V}\partial(\alpha)$ where $\alpha\in \mathrm{Crit}(G)$.
\item
n=1. Suppose $\alpha_i=v_0v_1$ and $\beta=v_0$ is not critical. By Lemma~\ref{le-26}, there exists an unique allowed elementary path $\alpha'_i=v_0v$ such that $\alpha'_i>\beta$ and $f(\alpha'_i)=f(\beta)$. That is,
\begin{eqnarray*}
\overline{V}(\beta)=v_0v.
\end{eqnarray*}
Since $\overline{\Phi}(x)=x$, it follows that $\overline{V}\partial(\alpha_i)$ must be cancelled out by some $\overline{V}\partial(\alpha_j)$, $1\leq j\not=i\leq m$. Let $\alpha_j=u_0u_1$.  Then $\beta'=u_0$ is not critical and
\begin{eqnarray*}
\overline{V}(\beta')=u_0u
\end{eqnarray*}
where $\alpha'_j=u_0u>\beta'$ and $f(\alpha'_j)=f(\beta')$.  Moreover,  $v_0=u_0$, $v=u$ and $a_i=-a_j$. Similarly, if $\beta''=u_1$ is not critical, then there must exist another critical path $\alpha_k$ ($k\not=i,j$) in $x$ such that $\beta''$ is cancelled out by $\overline{V}\partial(\alpha_k)$. Hence,
by finite steps, we can have that
\begin{eqnarray*}
\sum\limits_{i=1}^{m}a_i\overline{V}\partial(\alpha_i)=0.
\end{eqnarray*}
Therefore,
\begin{eqnarray*}
x&=&\sum\limits_{i=1}^{m}a_i\alpha_i\\
&=&\sum\limits_{i=1}^{m}a_i\alpha_i+\sum\limits_{i=1}^{m}a_i\overline{V}\partial(\alpha_i)\\
&=&\sum\limits_{i=1}^{m}a_i\big(\alpha_i+\overline{V}\partial(\alpha_i)\big).
\end{eqnarray*}
\end{itemize}

 {\sc Case~5}. Some $\alpha_i$ are critical and  some are not critical. Without loss of generality, $\alpha_1$ is not critical. Then $\overline{\Phi}(\alpha_1)=0$. Since all $\alpha_i$ are distinct, $\alpha_1$ can not be cancelled out by any critical path and $x\in P_*^{\overline{\Phi}}(G)$, it follows that there must exist some critical path $\alpha_i$ ($ 2\leq i\leq m$) such that $\overline{V}\partial(\alpha_i)=\alpha_1$ and $a_1=a_i$. Thus,
 \begin{eqnarray}\label{eq-l10}
 a_1\alpha_1+a_i\alpha_i=a_i(\alpha_i+\overline{V}\partial(\alpha_i)).
 \end{eqnarray}
Moreover, by (\ref{eq-l10}) and Step~1,
\begin{eqnarray*}
\overline{\Phi}(a_1\alpha_1+a_i\alpha_i)&=&a_i\overline{\Phi}(\alpha_i+\overline{V}\partial(\alpha_i))\\
&=&a_i(\alpha_i+\overline{V}\partial(\alpha_i))\\
&=& a_1\alpha_1+a_i\alpha_i.
\end{eqnarray*}
Hence, any non critical path $\alpha_j$ in $x$ can be written as $\overline{V}\partial(\alpha_i)$ where $\alpha_i$ is critical ($1\leq i\not=j\leq n$). And after removing such non-critical paths $\alpha_j$ and the corresponding critical paths $\alpha_i$ from $x$, the rest is a formal linear combination of critical paths which is invariant under $\overline{\Phi}$, denoted as $x'$. Therefore, by Case~4 of Step~2, $x'$ can be written as  a formal linear combination of $\alpha+\overline{V}\partial(\alpha)$ where $\alpha\in \mathrm{Crit}(G)$. So is $x$.

Combining Case~3,  Case~4 and Case~5,   we have that for any $x\in P_*^{\overline{\Phi}}({G})$, it can be written as a linear combination of $\alpha+\overline{V}\partial(\alpha)$ where $\alpha\in \mathrm{Crit}(G)$.

Therefore, by Step~1 and Step~2, the proposition is proved.
\end{proof}

Finally,  we give a description of path homology groups of digraphs with critical paths of transitive closure.
\begin{theorem}(cf. \cite[Corollary~2.16]{lin})\label{th-111}
Let $G$ be a digraph and $f$ a discrete Morse function on $G$. Let $\bar{f}$ be the extension of $f$ on $\bar{G}$ and $\overline{V}=\mathrm{grad}\bar{f}$ the discrete gradient vector field on $\bar{G}$. Suppose $\Omega_*(G)$ is $\overline{V}$-invariant ($\overline{V}(\Omega_*(G))\subseteq \Omega_*(G)$). Then
\begin{eqnarray*}
H_m(G;R)\cong H_m(\Omega_*^{\overline{\Phi}}(G)), m \geq 0
\end{eqnarray*}
where
\begin{eqnarray*}
\Omega_*^{\overline{\Phi}}(G)=\Omega_*(G)\cap P_*^{\overline{\Phi}}(\bar{G}).
\end{eqnarray*}
\end{theorem}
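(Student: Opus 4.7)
The strategy is to realize the claimed isomorphism as being induced by the stabilization $\overline{\Phi}^\infty$ of the discrete gradient flow, exploiting the fact that $\overline{\Phi}$ is a chain map on $P_*(\bar G)$ and is chain-homotopic to the identity via $\overline{V}$.

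The first step is a short algebraic check: since $\partial^2=0$, one computes $\partial\overline{\Phi}=\partial+\partial\overline{V}\partial=\overline{\Phi}\partial$, so $\overline{\Phi}$ commutes with $\partial$. Moreover the very definition rearranges as $\overline{\Phi}-\mathrm{Id}=\partial\overline{V}+\overline{V}\partial$, which simultaneously exhibits $\overline{V}$ as a chain homotopy between $\overline{\Phi}$ and $\mathrm{Id}$. Then, using the hypothesis $\overline{V}(\Omega_*(G))\subseteq\Omega_*(G)$ together with the fact that $\partial$ preserves $\Omega_*(G)$ by construction, I would conclude that $\overline{\Phi}$ restricts to a chain endomorphism of $\Omega_*(G)$ and that the chain homotopy $\overline{V}$ also stays inside $\Omega_*(G)$. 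Consequently $\Omega_*^{\overline{\Phi}}(G)=\Omega_*(G)\cap P_*^{\overline{\Phi}}(\bar G)$ is a subcomplex of $\Omega_*(G)$ (invariance under $\partial$ follows because $\overline{\Phi}$ commutes with $\partial$).

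Next I would invoke the stabilization map $\overline{\Phi}^\infty$. Since each $\Omega_n(G)$ is finite-dimensional, the sequence $\overline{\Phi}^N$ stabilizes after finitely many iterates (this is the setting already used in Corollary~\ref{co-aa}); denote the limit by $\overline{\Phi}^\infty$. By construction its image is $\overline{\Phi}$-invariant and, by the restriction step above, it remains in $\Omega_*(G)$, hence lies in $\Omega_*^{\overline{\Phi}}(G)$. A telescoping identity gives
\begin{eqnarray*}
\overline{\Phi}^N-\mathrm{Id}=\sum_{k=0}^{N-1}(\overline{\Phi}-\mathrm{Id})\overline{\Phi}^k=\partial\Bigl(\sum_{k=0}^{N-1}\overline{V}\overline{\Phi}^k\Bigr)+\Bigl(\sum_{k=0}^{N-1}\overline{V}\overline{\Phi}^k\Bigr)\partial,
\end{eqnarray*}
where in the second equality I use that $\overline{\Phi}^k$ is a chain map so that $\overline{V}\overline{\Phi}^k$ can be placed inside both $\partial$-terms. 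Stabilization of $\overline{\Phi}^k$ together with $\overline{V}(\Omega_*(G))\subseteq\Omega_*(G)$ makes this chain homotopy stabilize inside $\Omega_*(G)$, so $\overline{\Phi}^\infty$ is chain-homotopic to the identity on $\Omega_*(G)$.

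Finally, I would conclude the isomorphism by a two-sided inverse argument: the inclusion $\iota:\Omega_*^{\overline{\Phi}}(G)\hookrightarrow\Omega_*(G)$ is a chain map, $\overline{\Phi}^\infty\circ\iota=\mathrm{Id}$ on $\Omega_*^{\overline{\Phi}}(G)$ (since elements there are already $\overline{\Phi}$-invariant), and $\iota\circ\overline{\Phi}^\infty$ is chain-homotopic to $\mathrm{Id}_{\Omega_*(G)}$ by the previous step. Passing to homology gives the required isomorphism $H_m(\Omega_*^{\overline{\Phi}}(G))\cong H_m(G;R)$. The main obstacle I anticipate is ensuring that every auxiliary map used in the homotopy (the partial sums $\sum\overline{V}\overline{\Phi}^k$) actually stays inside $\Omega_*(G)$ rather than merely inside $P_*(\bar G)$; this is guaranteed by combining the $\overline{V}$-invariance hypothesis with the restriction property of $\overline{\Phi}$ established in the first step, so the whole argument goes through without leaving the $\partial$-invariant subcomplex.
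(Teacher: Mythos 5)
Your proposal is essentially correct, but note that the paper itself offers no proof of Theorem~\ref{th-111}: it is imported verbatim from \cite[Corollary~2.16]{lin}, so there is no in-paper argument to compare against. What you have written is a self-contained reconstruction of the standard Forman-style deformation argument that underlies the cited result: $\overline{\Phi}$ is a chain map chain-homotopic to $\mathrm{Id}$ via $\overline{V}$, the hypothesis $\overline{V}(\Omega_*(G))\subseteq\Omega_*(G)$ lets everything restrict to $\Omega_*(G)$, the telescoping identity $\overline{\Phi}^N-\mathrm{Id}=\partial H_N+H_N\partial$ with $H_N=\sum_{k=0}^{N-1}\overline{V}\,\overline{\Phi}^k$ exhibits $\overline{\Phi}^\infty$ as a deformation retraction of $\Omega_*(G)$ onto $\Omega_*^{\overline{\Phi}}(G)$, and the two-sided inverse argument on homology finishes the proof. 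Two small caveats. First, your justification that $\overline{\Phi}^N$ stabilizes (``since each $\Omega_n(G)$ is finite-dimensional'') is not by itself sufficient --- finite-dimensionality does not force iterates of a linear map to stabilize; the actual reason is Forman's analysis of the gradient flow (the off-diagonal terms of $\overline{\Phi}$ strictly decrease $\bar f$-values), which the present paper simply presupposes by using $\overline{\Phi}^\infty$. Second, you do not need, and should not claim, that the homotopies $H_N$ themselves ``stabilize''; it suffices to fix the single $N$ with $\overline{\Phi}^N=\overline{\Phi}^\infty$ and use the identity $\overline{\Phi}^\infty-\mathrm{Id}=\partial H_N+H_N\partial$ for that one $N$, all of whose terms lie in $\Omega_*(G)$ by the invariance hypothesis. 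With those adjustments your argument is complete.
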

By Proposition~\ref{prop-11} and Theorem~\ref{th-111}, we have that
\begin{theorem}\label{th-999}
Let $G$ be a digraph and $f$ a discrete Morse function on $G$. Let $\bar{f}$ be the extension of $f$ on $\bar{G}$ and $\overline{V}=\mathrm{grad}\bar{f}$ the discrete gradient vector field on $\bar{G}$. Suppose $\Omega_*(G)$ is $\overline{V}$-invariant. Then
\begin{eqnarray*}
H_m(G;R)\cong  H_m\big(\{R(\alpha+\overline{V}\partial(\alpha))\cap \Omega_n(G),\partial_n\}_{n\geq 0}\big).
\end{eqnarray*}
where $\alpha\in \mathrm{Crit}_n(\bar{G})$.
\end{theorem}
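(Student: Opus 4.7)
The plan is to deduce Theorem~\ref{th-999} by directly composing the two main results already established in this section, namely Proposition~\ref{prop-11} (which describes the $\overline{\Phi}$-invariant module on a transitive digraph in terms of critical paths) and Theorem~\ref{th-111} (which identifies the path homology of $G$ with the homology of $\Omega_*^{\overline{\Phi}}(G) = \Omega_*(G)\cap P_*^{\overline{\Phi}}(\bar{G})$). No new combinatorial work should be required; the content of the theorem is essentially an unwinding of definitions once these two facts are in place.

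First, I would invoke Theorem~\ref{th-111}: since $f$ is a discrete Morse function on $G$ and $\Omega_*(G)$ is $\overline{V}$-invariant by hypothesis, we obtain the isomorphism
\begin{eqnarray*}
H_m(G;R)\cong H_m(\Omega_*^{\overline{\Phi}}(G)),\qquad m\geq 0,
\end{eqnarray*}
with boundary map the restriction of $\partial$. Next, since $\bar{G}$ is transitive and $\bar{f}$ is a discrete Morse function on $\bar{G}$, Proposition~\ref{prop-11} applies and gives
\begin{eqnarray*}
P_*^{\overline{\Phi}}(\bar{G}) = R\big(\alpha+\overline{V}\partial(\alpha)\big),\qquad \alpha\in\mathrm{Crit}(\bar{G}).
\end{eqnarray*}
Intersecting both sides with $\Omega_*(G)$ then yields
\begin{eqnarray*}
\Omega_*^{\overline{\Phi}}(G) = R\big(\alpha+\overline{V}\partial(\alpha)\big)\cap \Omega_*(G),\qquad \alpha\in\mathrm{Crit}(\bar{G}),
\end{eqnarray*}
which is exactly the chain module appearing on the right-hand side of the statement.

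The only point that deserves a brief check is that the boundary map really does restrict to this subspace so that it defines a chain complex, so that the homology on the right-hand side is well-defined. For this one uses that $\partial$ commutes with $\overline{\Phi}$: from $\partial^2=0$ one computes $\partial\overline{\Phi} = \partial + \partial\overline{V}\partial = \overline{\Phi}\partial$, so $\partial$ preserves $P_*^{\overline{\Phi}}(\bar{G})$, and by assumption $\partial$ also preserves $\Omega_*(G)$ (as $\Omega_*(G)$ is already a chain complex). Hence $\partial$ preserves the intersection, and the chain complex in the statement coincides as a chain complex with $\Omega_*^{\overline{\Phi}}(G)$.

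Concatenating the two isomorphisms then finishes the proof. I do not foresee a genuine obstacle: the heavy lifting has been done in Proposition~\ref{prop-11} (the detailed case analysis on critical versus non-critical paths) and in the earlier Theorem~\ref{th-111} taken from \cite{lin}; the present statement is the clean corollary of gluing them together via the identity $\Omega_*^{\overline{\Phi}}(G)=\Omega_*(G)\cap P_*^{\overline{\Phi}}(\bar{G})$.
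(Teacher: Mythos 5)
Your proposal is correct and matches the paper exactly: the paper states Theorem~\ref{th-999} as an immediate consequence of Proposition~\ref{prop-11} and Theorem~\ref{th-111}, precisely the composition you carry out. Your added check that $\partial\overline{\Phi}=\overline{\Phi}\partial$ (so that the intersection is indeed a subcomplex) is a detail the paper leaves implicit, and it is verified correctly.
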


\smallskip

We give an example to illustrate Theorem~\ref{th-999}.
\begin{example}(cf. \cite[Example~3.2]{lin})\label{ex-w1}
Let $G$ be a square as follows and $\bar{G}$ the transitive closure of $G$. Let $f$ be a function on $G$ with
\begin{eqnarray*}
f(v_1)=0, f(v_i)>0 \text{ for } i=0,2,3.
\end{eqnarray*}
\begin{figure}[!htbp]
 \begin{center}
\begin{tikzpicture}[line width=1.5pt]

\coordinate [label=left:$v_0$]    (A) at (1,0);
 \coordinate [label=right:$v_1$]   (B) at (2.5,1);
 \coordinate  [label=right:$v_2$]   (C) at (2.5,-1);
\coordinate  [label=right:$v_3$]   (D) at (4,0);
\fill (1,0) circle (2.5pt);
\fill (2.5,1) circle (2.5pt);
\fill (2.5,-1) circle (2.5pt);
\fill (4,0) circle (2.5pt);

\coordinate[label=left:$G$:] (G) at (0.5,0);
\draw [line width=1.5pt]  (A) -- (B);
\draw [line width=1.5pt]  (A) -- (C);
\draw [line width=1.5pt]   (B) -- (D);
\draw [line width=1.5pt]  (C) -- (D);
 \draw[->] (1,0) -- (2,2/3);

\draw[->] (2.5,1) -- (3,2/3);

\draw[->] (1,0) -- (2,-2/3);

\draw[->] (2.5,-1) -- (3,-2/3);

\coordinate [label=left:$v_0$]    (A) at (1+6,0);
 \coordinate [label=right:$v_1$]   (B) at (2.5+6,1);
 \coordinate  [label=right:$v_2$]   (C) at (2.5+6,-1);
\coordinate  [label=right:$v_3$]   (D) at (4+6,0);
\fill (1+6,0) circle (2.5pt);
\fill (2+6.5,1) circle (2.5pt);
\fill (2.5+6,-1) circle (2.5pt);
\fill (4+6,0) circle (2.5pt);

\coordinate[label=left:$\bar{G}$:] (H) at (6+0.5,0);
\draw [line width=1.5pt]  (A) -- (B);
 \draw [line width=1.5pt]  (A) -- (C);
 \draw [line width=1.5pt]  (A) -- (D);
\draw [line width=1.5pt] (B) -- (D);
\draw [line width=1.5pt]  (C) -- (D);
 \draw[->] (7,0) -- (8,2/3);

\draw[->]  (8.5,1) -- (9,2/3);

\draw[->]  (7,0) -- (8,-2/3);
\draw[->] (8.5,-1) -- (9,-2/3);
 \draw[->] (7,0) -- (9,0);

 \end{tikzpicture}
\end{center}

\caption{Example~\ref{ex-w1}.}
\end{figure}
It is easy to verify that $f$ is a discrete Morse function on $G$ and $f$ can be extended to be a discrete Morse function $\bar{f}$ on $\bar{G}$. Then
\begin{eqnarray*}
P_*(\bar{G})&=&\{v_0,v_1,v_2,v_3,v_0v_1,v_0v_2,v_0v_3,v_1v_3,v_2v_3,v_0v_1v_3,v_0v_2v_3\},\\
\Omega_*(G)&=&\{v_0,v_1,v_2,v_3,v_0v_1,v_0v_2,v_1v_3,v_2v_3,v_0v_1v_3-v_0v_2v_3\},\\
\mathrm{Crit}_*(\bar{G})&=&\{v_1,v_2,v_0v_2,v_2v_3,v_0v_2v_3\}.
\end{eqnarray*}
Let $\overline{V}=\mathrm{grad}\bar{f}$ be the discrete gradient vector field on $\bar{G}$ and $\overline \Phi=\mathrm{Id}+ \partial \overline V+\overline V\partial$ the discrete gradient flow of $\bar{G}$. Then
\begin{eqnarray*}
&& \overline{V}(v_0)=v_0v_1, ~~~\overline{V}(v_3)=-v_1v_3,\\
&& \overline{V}(v_0v_3)=v_0v_1v_3,\\
&& \overline{V}(\alpha)=0 ~ \text{for any other allowed elementary path}~\alpha \text{ on}~ \bar{G}.
\end{eqnarray*}
and
\begin{eqnarray*}
&\overline \Phi(v_0)=v_1,&\overline \Phi(v_1)=v_1,\\
&\overline \Phi(v_2)=v_2,&\overline \Phi(v_3)=v_1,\\
&\overline \Phi(v_0v_1)=0,&\overline \Phi(v_0v_2)=v_0v_2-v_0v_1,\\
&\overline \Phi(v_1v_3)=0,&\overline \Phi(v_2v_3)=v_2v_3-v_1v_3,\\
&\overline \Phi(v_0v_3)=0,&\overline \Phi(v_0v_1v_3)=0,\\
&~~~~~~~~~~~~~~~~~~~~\overline \Phi(v_0v_2v_3)=v_0v_2v_3-v_0v_1v_3.
\end{eqnarray*}
Hence,
\begin{eqnarray*}
\overline{V}(\Omega_n(G))\subseteq \Omega_{n+1}(G), \quad n\geq 0.
\end{eqnarray*}
and
\begin{eqnarray*}
R(\{\alpha+\overline{V}\partial(\alpha)\mid \alpha\in \mathrm{Crit}(\bar{G})\})&=&P_*^{\Phi}(\bar{G})\\
&=&R(v_1, v_2, v_0v_2-v_0v_1, v_2v_3-v_1v_3, v_0v_2v_3-v_0v_1v_3).
\end{eqnarray*}

Therefore,
\begin{eqnarray*}
&&\partial(v_0v_2-v_0v_1)=v_2-v_1,~~ \partial(v_2v_3-v_1v_3)=v_1-v_2,\\
&&\partial(v_0v_2v_3-v_0v_1v_3)=(v_0v_2-v_0v_1)+(v_2v_3-v_1v_3),
\end{eqnarray*}
and
\begin{eqnarray}
H_0\big(\{R(\alpha+\overline{V}\partial(\alpha))\cap \Omega_n(G),\partial_n\}_{n\geq 0}\big)&=&R,\label{eq-b1}\\
H_m\big(\{R(\alpha+\overline{V}\partial(\alpha))\cap \Omega_n(G),\partial_n\}_{n\geq 0}\big)&=&0 ~\text{for} ~m\geq 1,\label{eq-b2}
\end{eqnarray}
where $\alpha\in \mathrm{Crit}_n(\bar{G})$. (\ref{eq-b1}) and (\ref{eq-b2}) are consistent with the  path homology groups  $H_m(G;R)$ ($m\geq 0$) given in \cite[Proposition~4.7]{9}.

\end{example}

\subsection{Description of Path Homology of Digraphs}\label{sec-4}
In this section, we prove that path homology of digraph $G$ is  isomorphic to the homology  of  chain complex  consisting of the formal linear combinations of paths  which are not only critical paths of the transitive closure $\bar{G}$ but also allowed elementary paths of $G$.

\bigskip

Firstly, we  prove an  isomorphism of graded $R$-modules  in the following theorem.
\begin{theorem}\label{th-aa}
Let $G$ be a digraph and $\bar{G}$ the transitive closure of $G$. Let $\bar{f}$ be a discrete Morse function on $\bar{G}$. Suppose $\overline{\Phi}(\alpha)\in \Omega(G)$ for any $\alpha\in \mathrm{Crit}(\bar{G})\cap P(G)$ where $\overline{V}$ is the discrete gradient vector field on $\bar{G}$ and $\overline{\Phi}$ is the discrete gradient flow of $\bar{G}$. Then
\begin{eqnarray}\label{eq-cc}
\overline{\Phi}^{\infty}\mid_{\mathrm{Crit}_n(\bar{G})\cap P_n(G)}: \mathrm{Crit}_n(\bar{G})\cap P_n(G)\longrightarrow  P_n^{\overline{\Phi}}(\bar{G})\cap\Omega_n(G), ~~n\geq 0
\end{eqnarray}
is a graded $R$-module isomorphism.
\end{theorem}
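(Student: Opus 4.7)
The plan is to describe $\overline{\Phi}^{\infty}$ explicitly on critical generators and then check well-definedness, injectivity, and surjectivity using the characterization of $P_*^{\overline{\Phi}}(\bar{G})$ from Proposition~\ref{prop-11} together with the fact that $\overline{V}$ only outputs non-critical paths.

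First I would observe that for any critical $\alpha\in\mathrm{Crit}(\bar{G})$ one has $\overline{V}(\alpha)=0$, so $\overline{\Phi}(\alpha)=\alpha+\overline{V}\partial(\alpha)$. Step~1 of the proof of Proposition~\ref{prop-11} already establishes $\overline{\Phi}\bigl(\alpha+\overline{V}\partial(\alpha)\bigr)=\alpha+\overline{V}\partial(\alpha)$, so one application of $\overline{\Phi}$ already stabilizes critical generators. Extending by linearity, for any $x=\sum_i a_i\alpha_i$ with distinct $\alpha_i\in\mathrm{Crit}_n(\bar{G})\cap P_n(G)$,
\begin{eqnarray*}
\overline{\Phi}^{\infty}(x)=\sum_i a_i\bigl(\alpha_i+\overline{V}\partial(\alpha_i)\bigr).
\end{eqnarray*}
This reduces the rest of the argument to bookkeeping about which basis elements appear on the right-hand side. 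To verify that (\ref{eq-cc}) is well-defined, the hypothesis $\overline{\Phi}(\alpha)\in\Omega(G)$ for $\alpha\in\mathrm{Crit}(\bar{G})\cap P(G)$ gives $\overline{\Phi}^{\infty}(x)\in\Omega_n(G)$, while Proposition~\ref{prop-11} places each $\alpha_i+\overline{V}\partial(\alpha_i)$ in $P_n^{\overline{\Phi}}(\bar{G})$; hence $\overline{\Phi}^{\infty}(x)\in P_n^{\overline{\Phi}}(\bar{G})\cap\Omega_n(G)$.

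Injectivity follows from a critical versus non-critical separation. Whenever $\overline{V}(\beta)\ne 0$, the definition of $\overline{V}$ produces $-\langle\partial\gamma,\beta\rangle\gamma$ for some $\gamma>\beta$ with $f(\gamma)=f(\beta)$, and such a $\gamma$ is automatically non-critical; consequently $\overline{V}\partial(\alpha_i)$ is supported on non-critical elementary paths. Writing
\begin{eqnarray*}
\overline{\Phi}^{\infty}(x)=\sum_i a_i\alpha_i+\sum_i a_i\,\overline{V}\partial(\alpha_i),
\end{eqnarray*}
the two sums live in complementary parts of the basis of elementary paths of $\bar{G}$, so $\overline{\Phi}^{\infty}(x)=0$ forces $\sum_i a_i\alpha_i=0$, and linear independence of the distinct critical paths $\alpha_i$ yields $a_i=0$.

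For surjectivity I would invoke Proposition~\ref{prop-11} to write any $y\in P_n^{\overline{\Phi}}(\bar{G})\cap\Omega_n(G)$ as $y=\sum_i a_i\bigl(\alpha_i+\overline{V}\partial(\alpha_i)\bigr)$ with distinct $\alpha_i\in\mathrm{Crit}(\bar{G})$ and $a_i\ne 0$. The same separation shows that the coefficient of $\alpha_i$ in the elementary-path expansion of $y$ equals $a_i$ exactly, with no cancellation possible from any $\overline{V}\partial(\alpha_j)$. Since $y\in\Omega_n(G)\subseteq P_n(G)$, every elementary path appearing in $y$ with nonzero coefficient is already in $P_n(G)$, so each $\alpha_i\in P_n(G)$. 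Setting $x:=\sum_i a_i\alpha_i$ then gives $x\in\mathrm{Crit}_n(\bar{G})\cap P_n(G)$ with $\overline{\Phi}^{\infty}(x)=y$. I expect this surjectivity step to be the main obstacle: a priori the presentation of $y$ supplied by Proposition~\ref{prop-11} only lives in $P(\bar{G})$, and it is precisely the critical/non-critical separation together with $y\in P(G)$ that forces the critical generators to belong to $P(G)$ as well, which is where the hypothesis $\overline{\Phi}(\alpha)\in\Omega(G)$ for $\alpha\in\mathrm{Crit}(\bar{G})\cap P(G)$ becomes essential for matching target spaces.
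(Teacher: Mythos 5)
Your proposal is correct and follows essentially the same route as the paper: well-definedness via Step~1 of Proposition~\ref{prop-11} together with the hypothesis $\overline{\Phi}(\alpha)\in\Omega(G)$, and injectivity and surjectivity both via the observation that $\overline{V}\partial(\alpha_i)$ is supported on non-critical paths so that the critical generators can be read off without cancellation. Your injectivity step is in fact slightly cleaner than the paper's (you show the kernel is trivial rather than only that distinct critical basis paths have distinct images), but the underlying idea is identical.
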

\begin{proof}
Let  $\alpha\in \mathrm{Crit}_n(\bar{G})\cap P_n(G)$.  By Step~1 of Proposition~\ref{prop-11}, we have  that
\begin{eqnarray*}
\overline{\Phi}(\alpha)=\big(\alpha+\overline{V}\partial(\alpha)\big)\in P_n^{\overline{\Phi}}(\bar{G}).
\end{eqnarray*}
Then
\begin{eqnarray}\label{eq-l12}
\overline{\Phi}^{\infty}(\alpha)=\big(\alpha+\overline{V}\partial(\alpha)\big)\in P_n^{\overline{\Phi}}(\bar{G}).
\end{eqnarray}

Since
\begin{eqnarray*}
\overline{\Phi}(\alpha)\in\Omega_n(G),
\end{eqnarray*}
it follows that
\begin{eqnarray*}
\overline{\Phi}^{\infty}(\alpha)=\overline{\Phi}(\alpha)\in\Omega_n(G).
\end{eqnarray*}

Hence,
\begin{eqnarray*}
\overline{\Phi}^{\infty}(\alpha)\in  P_n^{\overline{\Phi}}(\bar{G})\cap\Omega_n(G)
\end{eqnarray*}
which implies that (\ref{eq-cc}) is well-defined.

Suppose $\alpha', \alpha''\in \mathrm{Crit}_n(\bar{G})\cap P_n(G)$ where $\alpha', \alpha''$ are distinct. Then by (\ref{eq-l12}),
\begin{eqnarray*}
\overline{\Phi}^{\infty}(\alpha')&=&\alpha'+\overline{V}\partial(\alpha')\label{eq-l6}\\
\overline{\Phi}^{\infty}(\alpha'')&=&\alpha''+\overline{V}\partial(\alpha'')\label{eq-l7}.
\end{eqnarray*}
We assert that
\begin{eqnarray*}
\overline{\Phi}^{\infty}(\alpha')\not=\overline{\Phi}^{\infty}(\alpha'').
\end{eqnarray*}
Suppose to the contrary,
\begin{eqnarray}\label{eq-n1}
\alpha'+\overline{V}\partial(\alpha')=\alpha''+\overline{V}\partial(\alpha'').
\end{eqnarray}
Let $\alpha$ be an arbitrary allowed elementary $n$-path on $\bar{G}$. Then either $\overline{V}(\alpha)=0$  or $\overline{V}(\alpha)=\gamma$ where $\gamma$ is an allowed elementary $(n+1)$-path on $\bar{G}$ such that $\gamma>\alpha$ and $\bar{f}(\gamma)=\bar{f}(\alpha)$. Hence, both $\overline{V}\partial(\alpha')$ and $\overline{V}\partial(\alpha'')$ are either equal to $0$ or formal linear combinations of non-critical paths on $\bar{G}$.
Then by (\ref{eq-n1}), $\alpha=\alpha'$ which contradicts that $\alpha$ and $\alpha'$ are distinct. Thus, (\ref{eq-cc}) is a monomorphism.

Moreover, by Proposition~\ref{prop-11}, we know that
\begin{eqnarray*}
P_n^{\overline{\Phi}}(\bar{G})=R\big(\alpha+\overline{V}\partial(\alpha)\big)
\end{eqnarray*}
where $\alpha\in \mathrm{Crit}_n(\bar{G})$. Let $x=\sum\limits_{i=1}^{m}a_i\big(\alpha_i+\overline{V}\partial(\alpha_i)\big)\in P_n^{\overline{\Phi}}(\bar{G})$ where $a_i\in R$ and $\alpha_i\in \mathrm{Crit}(\bar{G})$. For any $1\leq j\not=k\leq m$, since the pairs
\begin{eqnarray*}
&&\alpha_j~ \mathrm{and}~\alpha_k \quad\quad\quad \alpha_j ~\mathrm{and}~ \overline{V}\partial(\alpha_j)\quad\quad\quad \alpha_j ~\mathrm{and}~ \overline{V}\partial(\alpha_k)\\
&&\alpha_k~ \mathrm{and}~\overline{V}\partial(\alpha_j)\quad\quad \alpha_k~ \mathrm{and}~\overline{V}\partial(\alpha_k)
\end{eqnarray*}
 can not cancel out each other in $x$, it follows that if $x\in P_n(G)$, then $\alpha_i\in P_n(G)$ for each $1\leq i\leq m$. Hence,
\begin{eqnarray*}
P_n^{\overline{\Phi}}(\bar{G})\cap \Omega_n(G)\subseteq \{x=\sum\limits_{i=1}^{m}a_{i}(\alpha_i+\overline{V}\partial(\alpha_i))\mid{\alpha_i\in \mathrm{Crit}_n(\bar{G})\cap P_n(G), \partial x\in P(G)}\}.
\end{eqnarray*}
Furthermore, since
\begin{eqnarray*}
\overline{\Phi}(\alpha)=(\alpha+\overline{V}\partial(\alpha))\in\Omega(G)
\end{eqnarray*}
and by Proposition~\ref{prop-11},
\begin{eqnarray*}
\overline{\Phi}(\alpha)\in P_*^{\overline{\Phi}}(\bar{G})
\end{eqnarray*}
where  $\alpha\in \mathrm{Crit}(\bar{G})\cap P(G)$, it follows that
\begin{eqnarray*}
P_n^{\overline{\Phi}}(\bar{G})\cap \Omega_n(G)\supseteq
\{x=\sum\limits_{i=1}^{m}a_{i}(\alpha_i+\overline{V}\partial(\alpha_i))\mid{\alpha_i\in \mathrm{Crit}_n(\bar{G})\cap P_n(G)}\}.
\end{eqnarray*}
Hence,
\begin{eqnarray*}
P_n^{\overline{\Phi}}(\bar{G})\cap \Omega_n(G)=
\{x=\sum\limits_{i=1}^{m}a_{i}(\alpha_i+\overline{V}\partial(\alpha_i))\mid{\alpha_i\in \mathrm{Crit}_n(\bar{G})\cap P_n(G)}\}.
\end{eqnarray*}

That is, $P_n^{\overline{\Phi}}(\bar{G})\cap\Omega_n(G)$ is a $R$-module of  all the formal linear combinations of elements in the form
\begin{eqnarray*}
\alpha+\overline{V}\partial(\alpha)
\end{eqnarray*}
where $\alpha\in  \mathrm{Crit}_n(\bar{G})\cap P_n(G)$. This implies that (\ref{eq-cc}) is an epimorphism. Therefore, (\ref{eq-cc}) is an isomorphism.
\end{proof}
Next, we give an isomorphism of homology groups.
\begin{corollary}\label{co-aa}
Let $G$ be a digraph and $\bar{G}$ the transitive closure of $G$. Suppose $\Omega_*(G)$ is $\overline{V}$-invariant and $\overline{\Phi}(\alpha)\in \Omega(G)$  for any $\alpha\in \mathrm{Crit}(\bar{G})\cap P(G)$ where $\overline{V}$ is the discrete gradient vector field on $\bar{G}$ and $\overline{\Phi}$ is the discrete gradient flow of $\bar{G}$, respectively. Then
\begin{eqnarray}\label{eq-k1}
H_m(\{\mathrm{Crit}_n(\bar{G})\cap P_n(G),\tilde{\partial}_n\}_{n\geq 0})\cong H_m(G;R)
\end{eqnarray}
where $\tilde{\partial}=(\overline{\Phi}^{\infty})^{-1}\circ\partial \circ \overline{\Phi}^{\infty}$ and $\overline{\Phi}^{\infty}$ is the stabilization map of $\overline{\Phi}$.
\end{corollary}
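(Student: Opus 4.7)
The plan is to combine Theorem~\ref{th-aa} with Theorem~\ref{th-111} by using $\overline{\Phi}^{\infty}$ as a chain isomorphism. Theorem~\ref{th-aa} already supplies, in each degree, an $R$-module isomorphism
\begin{eqnarray*}
\overline{\Phi}^{\infty}\colon \mathrm{Crit}_n(\bar{G})\cap P_n(G)\longrightarrow  P_n^{\overline{\Phi}}(\bar{G})\cap\Omega_n(G)=\Omega_n^{\overline{\Phi}}(G),
\end{eqnarray*}
and Theorem~\ref{th-111} identifies $H_m(\Omega_*^{\overline{\Phi}}(G))$ with $H_m(G;R)$. So the task reduces to verifying that the right-hand side is a chain complex under $\partial$ and that $\tilde{\partial}=(\overline{\Phi}^{\infty})^{-1}\circ\partial\circ\overline{\Phi}^{\infty}$ is a well-defined boundary operator on the left-hand side.

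First I would check that $\Omega_*^{\overline{\Phi}}(G)$ is a subcomplex of $\Omega_*(G)$. The subspace $\Omega_*(G)$ is $\partial$-stable by definition. For $P_*^{\overline{\Phi}}(\bar{G})$, the key computation is that $\partial$ commutes with $\overline{\Phi}$: using $\partial^{2}=0$,
\begin{eqnarray*}
\partial\,\overline{\Phi}=\partial(\mathrm{Id}+\partial\overline{V}+\overline{V}\partial)=\partial+\partial\overline{V}\partial=(\mathrm{Id}+\partial\overline{V}+\overline{V}\partial)\partial=\overline{\Phi}\,\partial,
\end{eqnarray*}
so $\overline{\Phi}$-invariance is preserved by $\partial$, and the intersection $\Omega_*^{\overline{\Phi}}(G)$ is stable under $\partial$ as well. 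This is precisely the complex whose homology appears in Theorem~\ref{th-111}.

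Next I would transport $\partial$ through the isomorphism of Theorem~\ref{th-aa}. For each $\alpha\in \mathrm{Crit}_n(\bar{G})\cap P_n(G)$, $\overline{\Phi}^{\infty}(\alpha)=\alpha+\overline{V}\partial(\alpha)\in \Omega_n^{\overline{\Phi}}(G)$ by the hypothesis that $\overline{\Phi}(\alpha)\in\Omega(G)$ combined with Proposition~\ref{prop-11}. Then $\partial\overline{\Phi}^{\infty}(\alpha)\in \Omega_{n-1}^{\overline{\Phi}}(G)$ by the subcomplex property just established, and since $\overline{\Phi}^{\infty}$ is bijective onto $\Omega_{n-1}^{\overline{\Phi}}(G)$ in degree $n-1$ (again by Theorem~\ref{th-aa}), the formula $\tilde{\partial}_n=(\overline{\Phi}^{\infty})^{-1}\circ\partial_n\circ\overline{\Phi}^{\infty}$ lands back in $\mathrm{Crit}_{n-1}(\bar{G})\cap P_{n-1}(G)$. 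Moreover $\tilde{\partial}^{2}=(\overline{\Phi}^{\infty})^{-1}\partial^{2}\overline{\Phi}^{\infty}=0$ automatically, so $\{\mathrm{Crit}_n(\bar{G})\cap P_n(G),\tilde{\partial}_n\}_{n\geq 0}$ is a chain complex, and $\overline{\Phi}^{\infty}$ is a chain isomorphism onto $\{\Omega_n^{\overline{\Phi}}(G),\partial_n\}_{n\geq 0}$.

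Taking homology then gives $H_m(\{\mathrm{Crit}_n(\bar{G})\cap P_n(G),\tilde{\partial}_n\})\cong H_m(\Omega_*^{\overline{\Phi}}(G))$, and composing with Theorem~\ref{th-111} yields (\ref{eq-k1}). The only potential obstacle is ensuring the codomain of $\partial\circ\overline{\Phi}^{\infty}$ really sits inside $\Omega_{n-1}^{\overline{\Phi}}(G)$ so that $(\overline{\Phi}^{\infty})^{-1}$ can be applied; this is exactly what the commutation $\partial\overline{\Phi}=\overline{\Phi}\partial$ and the $\overline{V}$-invariance hypothesis on $\Omega_*(G)$ guarantee. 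The remainder of the argument is a direct assembly of the two cited results.
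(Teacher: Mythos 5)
Your proposal is correct and follows essentially the same route as the paper: transport $\partial$ through the graded isomorphism of Theorem~\ref{th-aa} to make $\{\mathrm{Crit}_n(\bar{G})\cap P_n(G),\tilde{\partial}_n\}$ a chain complex isomorphic to $\{P_n^{\overline{\Phi}}(\bar{G})\cap\Omega_n(G),\partial_n\}$, then apply Theorem~\ref{th-111}. The only (harmless) difference is that you justify the subcomplex property of $P_*^{\overline{\Phi}}(\bar{G})\cap\Omega_*(G)$ by the direct computation $\partial\overline{\Phi}=\overline{\Phi}\partial$, whereas the paper cites \cite[Theorem~2.14~(iii)]{lin} for that step.
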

\begin{proof}
Let $\alpha\in \mathrm{Crit}_n(\bar{G})\cap P_n(G)$. Since $\overline{\Phi}(\alpha)\in \Omega(G)$  for any $\alpha\in \mathrm{Crit}(\bar{G})\cap P(G)$ , by the proof of Theorem~\ref{th-aa}, it follows that
\begin{eqnarray*}
\overline{\Phi}^{\infty}(\alpha)=\alpha+\overline{V}\partial(\alpha)\in P_n^{\overline{\Phi}}(\bar{G})\cap \Omega_n(G).
\end{eqnarray*}
By \cite[Theorem~2.14 (iii)]{lin}, $\{P_{*}^{\overline{\Phi}}(\bar{G})\cap \Omega_{*}(G),\partial_*\}$ is a chain complex. Then
\begin{eqnarray*}
\partial\overline{\Phi}^{\infty}(\alpha)\in P_{n-1}^{\overline{\Phi}}(\bar{G})\cap \Omega_{n-1}(G).
\end{eqnarray*}
 By Theorem~\ref{th-aa}, we know that
 \begin{eqnarray*}
 \overline{\Phi}^{\infty}\mid_{\mathrm{Crit}_*(\bar{G})\cap P_*(G)}
 \end{eqnarray*}
 is an isomorphism. Hence
 \begin{eqnarray*}
\tilde\partial(\alpha)= (\overline{\Phi}^{\infty})^{-1}\circ\partial \circ \overline{\Phi}^{\infty}(\alpha)\in \mathrm{Crit}_{n-1}(\bar{G})\cap P_{n-1}(G).
 \end{eqnarray*}
 Thus, $\{\mathrm{Crit}_n(\bar{G})\cap P_n(G),\tilde\partial_n\}_{n\geq 0}$ is a chain complex.

Moreover, for each $n\geq 0$, $\overline{\Phi}^{\infty}\circ \tilde{\partial}_n=\partial_n\circ\overline{\Phi}^{\infty}$. That is,  the following diagram
\begin{eqnarray*}
 \xymatrix{
\mathrm{Crit}_n(\bar{G})\cap P_n(G)\ar[d]^{\overline{\Phi}^{\infty}}\ar[r]^{\tilde\partial_n}& \mathrm{Crit}_{n-1}(\bar{G})\cap P_{n-1}(G)\ar[d]^{\overline{\Phi}^{\infty}}\\
P_{n}^{\overline{\Phi}}(\bar{G})\cap \Omega_{n}(G)\ar[r]^{\partial_n}&  P_{n-1}^{\overline{\Phi}}(\bar{G})\cap \Omega_{n-1}(G)
 }
 \end{eqnarray*}
is commutative.

Hence, by Theorem~\ref{th-aa},
\begin{eqnarray}\label{eq-l8}
H_m(\{\mathrm{Crit}_n(\bar{G})\cap P_n(G),\tilde{\partial}_n\}_{n\geq 0})\cong H_m(\{P_{n}^{\overline{\Phi}}(\bar{G})\cap \Omega_{n},\partial_n\}_{n\geq0}).
\end{eqnarray}
By Theorem~\ref{th-111},
\begin{eqnarray}\label{eq-l9}
H_m(G;R)\cong H_m(\{P_{n}^{\overline{\Phi}}(\bar{G})\cap \Omega_{n},\partial_n\}_{n\geq0}).
\end{eqnarray}
Therefore, by (\ref{eq-l8}) and (\ref{eq-l9}),
\begin{eqnarray*}
H_m(\{\mathrm{Crit}_n(\bar{G})\cap P_n(G),\tilde{\partial}_n\}_{n\geq 0})\cong H_m(G;R).
\end{eqnarray*}
\end{proof}
\bigskip

Finally, we give some examples to illustrate Corollary~\ref{co-aa}.
\begin{example}(cf. \cite[Example~3.5]{lin})\label{ex-aa}
Consider the following digraph $G$ and its transitive closure $\bar{G}$. Let $f: V(G)\longrightarrow [0, +\infty)$ be  a function on $G$ with $f(v_1)=0$ and $f(v_i)>0$, $0\leq i \leq 5, i\not=1$. It's easy to verify that $f$ can be extended to be a Morse function $\bar{f}$ on $\bar{G}$ such that $\bar{f}(v)=f(v)$ for all vertices $v\in V(G)$. Then
\begin{eqnarray*}
P(G)&=&\{v_0,v_1,v_2,v_3,v_4,v_5,v_0v_1,v_0v_2,v_1v_3,v_1v_4,v_2v_3,v_2v_4,\\
&&v_5v_3,v_5v_4,v_0v_1v_3,v_0v_1v_4,v_0v_2v_3,v_0v_2v_4\}\\
\Omega(G)&=&\{v_0, v_1, v_2, v_3, v_4, v_5,  v_0v_1, v_0v_2, v_1v_3, v_1v_4,  v_2v_3, v_2v_4,\\
&&v_5v_3, v_5v_4, v_0v_1v_3-v_0v_2v_3, v_0v_1v_4-v_0v_2v_4\}\\
\mathrm{Crit}(\bar{G})&=&\{v_1,v_2,v_5,v_0v_2,v_2v_3,v_2v_4,v_5v_3,v_5v_4,v_0v_2v_3,v_0v_2v_4\}\\
\mathrm{Crit}(\bar{G})\cap P(G)&=&\{v_1,v_2,v_5,v_0v_2,v_2v_3,v_2v_4,v_5v_3,v_5v_4,v_0v_2v_3,v_0v_2v_4\}.
\end{eqnarray*}
Let $\overline{V}=\mathrm{grad}\bar{f}$ be the discrete gradient vector field on $\bar{G}$. Then
\begin{eqnarray}
&&\overline{V}(v_0)=v_0v_1, ~~\overline{V}(v_3)=-v_1v_3,~~\overline{V}(v_4)=-v_1v_4,\nonumber\\
&&\overline{V}(v_0v_3)=v_0v_1v_3,~~ \overline{V}(v_0v_4)=v_0v_1v_4,\nonumber\\
&& \overline{V}(\alpha)=0 ~ \text{for any other allowed elementary path}~\alpha \text{ on}~ \bar{G}.\nonumber
\end{eqnarray}
Hence $\Omega(G)$ is $\overline{V}$-invariant.
\begin{figure}[!htbp]
 \begin{center}
\begin{tikzpicture}[line width=1.5pt]

\coordinate [label=left:$v_0$]    (A) at (3,1);
 \coordinate [label=left:$v_1$]   (B) at (2,2);
 \coordinate  [label=right:$v_2$]   (C) at (4,2);
\coordinate  [label=left:$v_3$]   (D) at (2,4);
\coordinate  [label=right:$v_4$]   (E) at (4,4);
 \coordinate [label=left:$v_5$]   (F) at (3,5);

\coordinate[label=left:$G$:] (H) at (1,3);
   \draw [line width=1.5pt]  (A) -- (B);
    \draw [line width=1.5pt]  (A) -- (C);
      \draw [line width=1.5pt]  (B) -- (D);
       \draw [line width=1.5pt]  (B) -- (E);
            \draw [line width=1.5pt]  (C) -- (D);
                 \draw [line width=1.5pt]  (C) -- (E);
                  \draw [line width=1.5pt]  (F) -- (D);
 \draw [line width=1.5pt]  (F) -- (E);
 \draw[->] (3,1) -- (2.5,1.5);
 \draw[->] (3,1) -- (3.5,1.5);

\draw[->] (2,2) -- (2,3.5);
\draw[->] (2,2) -- (3.8,3.8);

\draw[->] (4,2) -- (4,3.5);
\draw[->] (4,2) -- (2.2,3.8);

\draw[->] (3,5) -- (2.5,4.5);
\draw[->] (3,5) -- (3.5,4.5);

\fill (3,1)  circle (2.5pt) (2,2)  circle (2.5pt)  (4,2) circle (2.5pt) (2,4) circle (2.5pt) (4,4) circle (2.5pt) (3,5)  circle (2.5pt);

\coordinate [label=left:$v_0$]    (A) at (3+6,1);
 \coordinate [label=left:$v_1$]   (B) at (2+6,2);
 \coordinate  [label=right:$v_2$]   (C) at (4+6,2);
\coordinate  [label=left:$v_3$]   (D) at (2+6,4);
\coordinate  [label=right:$v_4$]   (E) at (4+6,4);
 \coordinate [label=left:$v_5$]   (F) at (3+6,5);

\coordinate[label=left:$\bar{G}$:] (I) at (1+6,3);
   \draw [line width=1.5pt]  (A) -- (B);
    \draw [line width=1.5pt]  (A) -- (C);
    \draw [line width=1.5pt]  (A) -- (D);
    \draw [line width=1.5pt]  (A) -- (E);
      \draw [line width=1.5pt]  (B) -- (D);
       \draw [line width=1.5pt]  (B) -- (E);
            \draw [line width=1.5pt]  (C) -- (D);
                 \draw [line width=1.5pt]  (C) -- (E);
                  \draw [line width=1.5pt]  (F) -- (D);
 \draw [line width=1.5pt]  (F) -- (E);
 \draw[->] (3+6,1) -- (2.5+6,1.5);
 \draw[->] (3+6,1) -- (3.5+6,1.5);
 \draw[->] (3+6,1) -- (8.4,2.8);
\draw[->] (3+6,1) -- (9.6,2.8);

\draw[->] (2+6,2) -- (2+6,3.5);
\draw[->] (2+6,2) -- (3.8+6,3.8);

\draw[->] (4+6,2) -- (4+6,3.5);
\draw[->] (4+6,2) -- (2+6.2,3.8);

\draw[->] (3+6,5) -- (2.5+6,4.5);
\draw[->] (3+6,5) -- (3.5+6,4.5);

\fill (3+6,1)  circle (2.5pt) (2+6,2)  circle (2.5pt)  (4+6,2) circle (2.5pt) (2+6,4) circle (2.5pt) (4+6,4) circle (2.5pt) (3+6,5)  circle (2.5pt);

 \end{tikzpicture}
\end{center}
\caption{Example~\ref{ex-aa}.}
\end{figure}
Let $\overline \Phi=\mathrm{Id}+ \partial \overline V+\overline V\partial$  be the discrete gradient flow of $\bar{G}$. Then
\begin{eqnarray*}
&\overline \Phi(v_0)=v_1,             &\overline \Phi(v_1)=v_1,\\
&\overline \Phi(v_2)=v_2,            &\overline \Phi(v_3)=v_1,\\
&\overline \Phi(v_4)=v_1,             &\overline \Phi(v_5)=v_5,\\
&\overline \Phi(v_0v_1)=0,            &\overline \Phi(v_0v_2)=v_0v_2-v_0v_1,\\
&\overline \Phi(v_0v_3)=0,            &\overline \Phi(v_0v_4)=0,\\
&\overline \Phi(v_1v_3)=0,            &\overline \Phi(v_1v_4)=0,\\
&\overline \Phi(v_0v_1v_3)=0,         &\overline \Phi(v_2v_4)=v_2v_4-v_1v_4,\\
&\overline \Phi(v_0v_1v_4)=0,         &\overline \Phi(v_5v_4)=v_5v_4-v_1v_4,\\
&\overline \Phi(v_2v_3)=v_2v_3-v_1v_3,&\overline \Phi(v_0v_2v_3)=v_0v_2v_3-v_0v_1v_3,\\
&\overline \Phi(v_5v_3)=v_5v_3-v_1v_3,&\overline \Phi(v_0v_2v_4)=v_0v_2v_4-v_0v_1v_4,\\
\end{eqnarray*}
and for any $\alpha\in \mathrm{Crit}(\bar{G})\cap P(G)$, $\overline{\Phi}(\alpha)\in \Omega(G)$.

By calculate directly, we have that $\overline \Phi^\infty=\overline \Phi$.  Then
\begin{eqnarray*}
\tilde{\partial}(v_0v_2)&=&v_2-v_1,\\
\tilde{\partial}(v_2v_3)&=&v_1-v_2,\\
\tilde{\partial}(v_2v_4)&=&v_1-v_2,\\
\tilde{\partial}(v_5v_3)&=&v_1-v_5,\\
\tilde{\partial}(v_5v_4)&=&v_1-v_5,\\
\tilde{\partial}(v_0v_2v_3)&=&v_0v_2+v_2v_3,\\
\tilde{\partial}(v_0v_2v_4)&=&v_0v_2+v_2v_4.\\
\end{eqnarray*}
Therefore,
\begin{eqnarray*}
H_0(\{\mathrm{Crit}(\bar{G})\cap P(G),\tilde\partial\})&=&R\\
H_1(\{\mathrm{Crit}(\bar{G})\cap P(G),\tilde\partial\})&=&R\\
H_m(\{\mathrm{Crit}(\bar{G})\cap P(G),\tilde\partial\})&=&0~ \text{for}~m\geq 2,
\end{eqnarray*}
which are consistent with the  path homology groups of $G$.
\end{example}
\bigskip
By \cite[Remark~3.6]{lin},  the condition that  $\Omega(G)$ is $\overline{V}$-invariant  in Theorem~\ref{th-111} is sufficient but not necessary. The following example  will show us that the condition
\begin{eqnarray}\label{eq-ee}
\overline{\Phi}(\mathrm{Crit}(\bar{G})\cap P(G))\subseteq  \Omega(G)
\end{eqnarray}
in Corollary~\ref{co-aa} is also sufficient but not necessary.
\begin{example}(cf. \cite[Example~3.5]{lin})\label{ex-bb}
We still consider the digraph $G$ in Example~\ref{ex-aa}. Different from Example~\ref{ex-aa},  we firstly define $f$ with $f(v_0)=0$ and $f(v_i)>0$, $0<i \leq 5$. It is easy to verify that $f$ is a discrete Morse function on $G$ and it can be extended to be a discrete Morse function $\bar{f}$ on $\bar{G}$. By \cite[Example~3.5]{lin}, we have that
\begin{eqnarray*}
\mathrm{Crit}(\bar{G})&=&\{v_0, v_5, v_5v_3, v_5v_4\},\\
\mathrm{Crit}(\bar{G})\cap P(G)&=&\{v_0, v_5, v_5v_3, v_5v_4\},\\
\overline \Phi(v_5v_3)&=&(v_5v_3-v_0v_3)\not\in \Omega(G),\\
\overline \Phi(v_5v_4)&=&(v_5v_4-v_0v_4)\not\in \Omega(G).\\
\end{eqnarray*}
Hence, (\ref{eq-ee}) does not hold.

On the other hand,
\begin{eqnarray*}
\tilde\partial(v_5v_3)&=&v_0-v_5,\\
\tilde\partial(v_5v_4)&=&v_0-v_5.\\
\end{eqnarray*}
Therefore,
\begin{eqnarray*}
H_0(\{\mathrm{Crit}(\bar{G})\cap P(G),\tilde\partial\})&=&R\\
H_1(\{\mathrm{Crit}(\bar{G})\cap P(G),\tilde\partial\})&=&R\\
H_m(\{\mathrm{Crit}(\bar{G})\cap P(G),\tilde\partial\})&=&0~ \text{for}~m\geq 2,
\end{eqnarray*}
which are consistent with the  path homology groups of $G$.
\end{example}

\smallskip
\begin{example}\label{ex-h.17}
Consider the following digraph $G$ and its transitive closure $\bar{G}$. Let $f: V(G)\longrightarrow [0, +\infty)$ be  a function on $G$ with $f(v_0)=0$ and $f(v_i)>0, 0<i\leq 3$. Then $f$ is  a discrete Morse function on $G$ which can be extended to $\bar{G}$. Hence,
\begin{eqnarray*}
\Omega(G)&=&\{v_0, v_1, v_2, v_3, v_0v_1, v_1v_2, v_2v_3, v_0v_3\}\\
\mathrm{Crit}(\bar{G})&=&\{v_0\}\\
\mathrm{Crit}(\bar{G})\cap P(G)&=&\{v_0\}
\end{eqnarray*}
and
\begin{eqnarray*}
\overline{V}(v_2v_3)=-v_0v_2v_3\not\in \Omega(G)\quad \quad \overline{V}(v_1v_2)=-v_0v_1v_2\not\in \Omega(G).
\end{eqnarray*}
However,
\begin{eqnarray*}
H_0(\{\mathrm{Crit}(\bar{G})\cap P(G),\tilde\partial\})&=&R,\\
H_m(\{\mathrm{Crit}(\bar{G})\cap P(G),\tilde\partial\})&=&0~\text{for } m>0
\end{eqnarray*}
which are not consistent with the path homology groups of $G$ (cf. \cite[Proposition~4.7]{9}).
\begin{figure}[!htbp]
 \begin{center}
\begin{tikzpicture}[line width=1.5pt]

\coordinate [label=left:$v_0$]    (A) at (2,2);
 \coordinate [label=left:$v_1$]   (B) at (2,4);
 \coordinate  [label=right:$v_2$]   (C) at (4,4);
\coordinate  [label=right:$v_3$]   (D) at (4,2);
\fill (2,2) circle (2.5pt);
\fill (2,4) circle (2.5pt);
\fill (4,4) circle (2.5pt);
\fill (4,2) circle (2.5pt);

\coordinate[label=left:$G$:] (G) at (1,3);
\draw [line width=1.5pt]  (A) -- (B);
\draw [line width=1.5pt]  (B) -- (C);
\draw [line width=1.5pt]   (C) -- (D);
\draw [line width=1.5pt]  (A) -- (D);
 \draw[->] (2,2) -- (2,3);

\draw[->] (2,4) -- (3,4);

\draw[->] (4,4) -- (4,3);

\draw[->] (2,2) -- (3,2);

\coordinate [label=left:$v_0$]    (A) at (2+6,2);
 \coordinate [label=left:$v_1$]   (B) at (2+6,4);
 \coordinate  [label=right:$v_2$]   (C) at (4+6,4);
\coordinate  [label=right:$v_3$]   (D) at (4+6,2);
\fill (2+6,2) circle (2.5pt);
\fill (2+6,4) circle (2.5pt);
\fill (4+6,4) circle (2.5pt);
\fill (4+6,2) circle (2.5pt);

\coordinate[label=left:$\bar{G}$:] (H) at (7,3);
\draw [line width=1.5pt]  (A) -- (B);
\draw [line width=1.5pt]  (A) -- (C);
\draw [line width=1.5pt]  (B) -- (D);
\draw [line width=1.5pt]  (B) -- (C);
\draw [line width=1.5pt]   (C) -- (D);
\draw [line width=1.5pt]  (A) -- (D);
 \draw[->] (2+6,2) -- (2+6,3);
 \draw[->] (2+6,2) -- (9.5,3.5);

\draw[->] (2+6,4) -- (3+6,4);
\draw[->] (2+6,4) -- (9.5,2.5);

\draw[->] (4+6,4) -- (4+6,3);

\draw[->] (2+6,2) -- (3+6,2);

 \end{tikzpicture}
\end{center}
\caption{Example~\ref{ex-h.17}.}
\end{figure}
\end{example}
\begin{remark}
By Example~\ref{ex-bb} and Example~\ref{ex-h.17}, we know that if a digraph does not satisfy the conditions in Corollary~\ref{co-aa}, then  the isomorphism of homology groups given in (\ref{eq-k1})  may or  not hold.
\end{remark}

 \bigskip

Generally speaking, $\mathrm{Crit}_*(\bar{G})\cap P_*(G)$ in Theorem~\ref{th-aa} and Corollary~\ref{co-aa} can not be replaced by $\mathrm{Crit}_*{\bar{G}}\cap \Omega_*(G)$ which will be illustrated by the following example.
\begin{example}\label{ex-vv}
Consider the digraph given in Example~\ref{ex-w1}. Then
\begin{eqnarray*}
\mathrm{Crit}_*(\bar{G})\cap \Omega_*(G)&=&\{v_1,v_2,v_0v_2,v_2v_3\}.
\end{eqnarray*}
By \cite[Example~3.2]{lin},
\begin{eqnarray*}
&\overline \Phi(v_0)=v_1,&\overline \Phi(v_1)=v_1,\\
&\overline \Phi(v_2)=v_2,
&\overline \Phi(v_3)=v_1,\\
&\overline \Phi(v_0v_1)=0,
&\overline \Phi(v_0v_2)=v_0v_2-v_0v_1,\\
&\overline \Phi(v_1v_3)=0,
&\overline \Phi(v_2v_3)=v_2v_3-v_1v_3,\\
&\overline \Phi(v_0v_3)=0,
&\overline \Phi(v_0v_1v_3)=0,\\
&~~~~~~~~~~~~~~~~~~~~\overline \Phi(v_0v_2v_3)=v_0v_2v_3-v_0v_1v_3
\end{eqnarray*}
where $\overline \Phi=\mathrm{Id}+ \partial \overline V+\overline V\partial$  is the discrete gradient flow of $\bar{G}$. By calculate directly, we have that $\overline \Phi^\infty=\overline \Phi$. Then
\begin{eqnarray*}
P_*^{\overline{\Phi}}(\bar{G})\cap \Omega_*(G)&=&\{v_1,v_2,v_0v_2-v_0v_1,v_2v_3-v_1v_3,v_0v_2v_3-v_0v_1v_3\}.
\end{eqnarray*}
Hence, $\mathrm{Crit}_*(\bar{G})\cap \Omega_*(G)$ and $P_*^{\overline{\Phi}}(\bar{G})\cap \Omega_*(G)$ can not be isomorphic.
Moreover,
\begin{eqnarray*}
\tilde{\partial}(v_0v_2)&=&v_2-v_1,\\
\tilde{\partial}(v_2v_3)&=&v_1-v_2.
\end{eqnarray*}
Therefore,
\begin{eqnarray*}
H_0(\{\mathrm{Crit}(\bar{G})\cap \Omega(G),\tilde{\partial}\})&=&R,\\
 H_1(\{\mathrm{Crit}(\bar{G})\cap \Omega(G),\tilde{\partial}\})&=&R,\\
 H_m(\{\mathrm{Crit}(\bar{G})\cap \Omega(G),\tilde{\partial}\})&=&0 ~\text{for} ~m\geq 2,
\end{eqnarray*}
which are not consistent with the  path homology groups of $G$ given in \cite[Proposition~4.7]{9}.

\end{example}

\section{Morse Inequalities}\label{se-99}
In this section, we will give the Morse inequalities on digraphs by Corollary~\ref{co-aa}.

\bigskip

Given a chain complex
\begin{eqnarray*}
0{\longrightarrow} C_n \overset{\partial_n}{\longrightarrow} C_{n-1}\overset{\partial_{n-1}}{\longrightarrow}\cdots \overset{\partial_1}{\longrightarrow}C_0\longrightarrow 0
\end{eqnarray*}
where $C_p$ is a finite dimensional vector space over $R$ for each $0\leq p\leq n$. Consider its homology
\begin{eqnarray*}
H_p(C_*; R)=\mathrm{Ker}\partial_p /\mathrm{Im}\partial_{p+1}.
\end{eqnarray*}
Then
\begin{eqnarray}
\mathrm{dim}C_p&=&\mathrm{dim}\mathrm{Ker}\partial_p+\mathrm{dim}\mathrm{Im}\partial_p,\nonumber\\
\mathrm {dim}H_p(C_*;R)&=&\mathrm{dim}\mathrm{Ker}\partial_p-\mathrm{dim}\mathrm{Im}\partial_{p+1}\nonumber\\
&=&\mathrm{dim}C_p-\mathrm{dim}\mathrm{Im}\partial_{p}-\mathrm{dim}\mathrm{Im}\partial_{p+1}.\label{eq-99a}
\end{eqnarray}
Hence
\begin{eqnarray}\label{eq-999}
\mathrm{dim}C_p\geq \mathrm {dim}H_p(C_*;R)
\end{eqnarray}
and
\begin{eqnarray*}
&&\big(\mathrm{dim}C_p-\mathrm{dim}H_p(C_*;R)\big)-\big(\mathrm{dim}C_{p-1}-\mathrm{dim}H_{p-1}(C_*;R)\big)+\cdots\nonumber\\
&&+(-1)^p\big(\mathrm{dim}C_{0}-\mathrm{dim}H_{0}(C_*;R)\big)\nonumber\\
&&=\big(\mathrm{dim}\mathrm{Im}\partial_{p}+\mathrm{dim}\mathrm{Im}\partial_{p+1}\big)-
\big(\mathrm{dim}\mathrm{Im}\partial_{p-1}+\mathrm{dim}\mathrm{Im}\partial_{p}\big)+\cdots\nonumber\\
&&+(-1)^p\big(\mathrm{dim}\mathrm{Im}\partial_{0}+\mathrm{dim}\mathrm{Im}\partial_{1}\big)\nonumber\\
&&=\mathrm{dim}\mathrm{Im}\partial_{p+1}+(-1)^p\mathrm{dim}\mathrm{Im}\partial_{0}\nonumber\\
&&=\mathrm{dim}\mathrm{Im}\partial_{p+1}\geq0.
\end{eqnarray*}
Thus,
\begin{eqnarray}
&&\mathrm{dim}C_p-\mathrm{dim}C_{p-1}+\cdots +(-1)^p\mathrm{dim}C_{0}\nonumber\\
&&\geq \mathrm{dim}H_p(C_*;R)-\mathrm{dim}H_{p-1}(C_*;R)+\cdots+(-1)^{p}\mathrm{dim}H_{0}(C_*;R).\label{eq-99b}
\end{eqnarray}

Moreover, by (\ref{eq-99a}), the Euler characteristic of $\{C_*,\partial_*\}$ is
\begin{eqnarray*}
\chi&=&\sum\limits_{p=0}^n(-1)^p\mathrm{dim}H_p(C_*;R)\\
&=&\sum\limits_{p=0}^n(-1)^p\mathrm{dim}C_p.
\end{eqnarray*}
Hence
\begin{eqnarray}
&&\mathrm{dim}C_0-\mathrm{dim}C_1+\cdots+(-1)^n\mathrm{dim}C_n\nonumber\\
&&=\mathrm{dim}H_0(C_*;R)-\mathrm{dim}H_1(C_*;R)+\cdots+(-1)^n\mathrm{dim}H_n(C_*;R)\label{eq-99c}.
\end{eqnarray}

Let $G$ be a digraph and $\bar{G}$ the transitive closure of $G$. Denote
\begin{eqnarray*}
b_m&=&\mathrm {dim}H_m(G;R),\\
l_m&=&\mathrm{dim}\big(\mathrm{Crit}_m(\bar{G})\cap P_m(G)\big),\\
L_m&=&\mathrm{dim}\mathrm{Crit}_m(G).
\end{eqnarray*}

Then we have the following theorem.
\begin{theorem}(Morse Inequalities on Digraphs)\label{th-99}
Let $G$ be a  digraph  and $\bar{G}$ the transitive closure of $G$. Suppose $\Omega_*(G)$ is $\overline{V}$-invariant and $\overline{\Phi}(\alpha)\in \Omega(G)$  for any $\alpha\in \mathrm{Crit}(\bar{G})\cap P(G)$ where $\overline{V}$ is the discrete gradient vector field on $\bar{G}$ and $\overline{\Phi}$ is the discrete gradient flow of $\bar{G}$, respectively. Then
\begin{eqnarray}
&&L_m\geq l_m ,\label{eq-ff}\\
&&l_{m}-l_{m-1}+\cdots\pm l_0\geq b_{m}-b_{m-1}+\cdots\pm b_{0}\label{eq-mm}~ 
\end{eqnarray}
and
\begin{eqnarray}\label{eq-nn}
l_{0}-l_{1}+\cdots\pm l_{\mathrm{dim}G}\geq b_{0}-b_{1}+\cdots\pm b_{\mathrm{dim}G}
\end{eqnarray}
where $m\geq 0$ and $\mathrm{dim}G=\mathrm{max}\{p\mid \Omega_p(G)\not=0\}$.
\end{theorem}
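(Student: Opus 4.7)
The plan is to reduce all three inequalities to the standard chain-level counting identities displayed just before the statement (equations (\ref{eq-999})--(\ref{eq-99c})) applied to the chain complex $\{\mathrm{Crit}_n(\bar{G})\cap P_n(G), \tilde\partial_n\}_{n\geq 0}$, together with a direct comparison between critical sets of $G$ and $\bar G$.

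First, I would establish the inclusion
\begin{eqnarray*}
\mathrm{Crit}_m(\bar{G})\cap P_m(G)\subseteq \mathrm{Crit}_m(G),
\end{eqnarray*}
from which (\ref{eq-ff}) follows after taking dimensions. Let $\alpha\in \mathrm{Crit}_m(\bar{G})\cap P_m(G)$. Then $\alpha$ is allowed in $G$ and, since $G\subseteq \bar G$ as edge sets, every allowed $\beta<\alpha$ and every allowed $\gamma>\alpha$ in $G$ is also allowed in $\bar G$. Hence the non-existence in $\bar G$ of any $\beta^{(m-1)}<\alpha$ with $\bar f(\beta)=\bar f(\alpha)$ (resp.\ of any $\gamma^{(m+1)}>\alpha$ with $\bar f(\gamma)=\bar f(\alpha)$) forces the same non-existence in $G$, using that $\bar f$ extends $f$. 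Thus (i)' and (ii)' of Definition~\ref{def-1.2} hold for $\alpha$ in $G$, so $\alpha\in \mathrm{Crit}_m(G)$.

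Next, I invoke Corollary~\ref{co-aa}, which under the standing hypotheses gives the isomorphism
\begin{eqnarray*}
H_m\bigl(\{\mathrm{Crit}_n(\bar{G})\cap P_n(G), \tilde\partial_n\}_{n\geq 0}\bigr)\cong H_m(G;R).
\end{eqnarray*}
Setting $C_p:=\mathrm{Crit}_p(\bar{G})\cap P_p(G)$ in the general chain-complex preamble, we have $\dim C_p=l_p$ and $\dim H_p(C_*;R)=b_p$. The inequality (\ref{eq-99b}) then yields exactly (\ref{eq-mm}), and the Euler characteristic identity (\ref{eq-99c}) yields (\ref{eq-nn}), noting that $C_p=0$ for $p>\dim G$ since $\mathrm{Crit}_p(\bar G)\cap P_p(G)\subseteq P_p(G)\subseteq \Lambda_p(V)$ and $P_p(G)=0$ beyond $\dim G$ (more precisely, the bound $\dim G=\max\{p\mid \Omega_p(G)\neq 0\}$ suffices for the finite alternating sum on both sides to terminate consistently).

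The only real obstacle is verifying that the chain complex $\{C_p,\tilde\partial_p\}$ consists of finite-dimensional vector spaces (so that (\ref{eq-999})--(\ref{eq-99c}) apply), which is immediate from finiteness of $V(G)$, and confirming that $l_p=0$ for $p>\dim G$ so the truncation in (\ref{eq-nn}) is legitimate. The latter holds because $\mathrm{Crit}_p(\bar{G})\cap P_p(G)\subseteq P_p(G)$ and, for $p>\dim G$, one checks that any allowed $p$-path of $G$ would force $\Omega_p(G)\neq 0$ through the inclusion of elementary allowed paths, contradicting the definition of $\dim G$. With these points in hand, (\ref{eq-ff}), (\ref{eq-mm}), and (\ref{eq-nn}) all follow.
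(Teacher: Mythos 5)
Your proposal is correct and follows essentially the same route as the paper: it derives (\ref{eq-mm}) and (\ref{eq-nn}) by applying the general chain-complex identities (\ref{eq-99b}) and (\ref{eq-99c}) to $\{\mathrm{Crit}_n(\bar{G})\cap P_n(G),\tilde{\partial}_n\}$ via the isomorphism of Corollary~\ref{co-aa}, and obtains (\ref{eq-ff}) from the inclusion $\mathrm{Crit}_*(\bar{G})\cap P_*(G)\subseteq \mathrm{Crit}_*(G)$. Your spelled-out justification of that inclusion (fewer allowed faces and cofaces in $G$ than in $\bar{G}$, with $\bar{f}$ extending $f$) is exactly the content the paper compresses into ``since $G\subseteq\bar{G}$, by Definition~\ref{def-1.2}.''
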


\begin{proof}
By Corollary~\ref{co-aa},
\begin{eqnarray*}
H_m(\{\mathrm{Crit}_*(\bar{G})\cap P_*(G),\tilde{\partial}_*\})\cong H_m(G;R).
\end{eqnarray*}

By substituting $C_m$ and $H_m(C_*;R)$ with $\mathrm{Crit}_m(\bar{G})\cap P_m(G)$ and $H_m(G;R)$ respectively and by (\ref{eq-999}), we  have that
\begin{eqnarray*}\label{eq-88}
l_m\geq b_m.
\end{eqnarray*}
By (\ref{eq-99b}) and (\ref{eq-99c}), we  can obtain (\ref{eq-mm}) and (\ref{eq-nn}) respectively.

Moreover, since $G\subseteq \bar{G}$, by Definition~\ref{def-1.2}, it follows that
\begin{eqnarray*}
\mathrm{Crit}_*(\bar{G})\cap P_*(G)\subseteq \mathrm{Crit}_*(G).
\end{eqnarray*}
Hence  (\ref{eq-ff}) is proved.
\end{proof}
We take the digraphs and discrete Morse functions in Example~\ref{ex-w1} and Example~\ref{ex-aa} as examples to illustrate the above results in Theorem~\ref{th-99}.
\begin{example}
By Example~\ref{ex-w1}, we have that
\begin{eqnarray*}
\mathrm{Crit}_*(\bar{G})\cap P_*(G)&=&\{v_1,v_2,v_0v_2,v_2v_3,v_0v_2v_3\},\\
\mathrm{Crit}_*(G)&=&\{v_1,v_2,v_0v_2,v_2v_3,v_0v_1v_3,v_0v_2v_3\}.
\end{eqnarray*}
and
\begin{eqnarray*}
&&\overline{V}(\Omega(G))\subseteq \Omega(G)\\
&&\overline{\Phi}\big(\mathrm{Crit}_*(\bar{G})\cap P_*(G)\big)\subseteq \Omega_*(G).
\end{eqnarray*}
Then
\begin{eqnarray*}
&&l_0=2,~~ l_1=2, ~~ l_2=1,~~l_i=0~\mathrm{for}~ i>2,\\
&&L_0=2,~~ L_1=2, ~~ L_2=2,~~L_i=0~ \mathrm{for}~ i>2.
\end{eqnarray*}
and
\begin{eqnarray*}
b_0=1,~~b_i=0~\mathrm{for}~i\geq 1.
\end{eqnarray*}
Hence
\begin{eqnarray*}
&&L_m\geq l_m~ \text{for any}~m\geq 0,\\
&&2>1~\text{for}~m=0,\\
&&2-2>0-1 ~\text{for}~m=1,\\
&&1-2+2\geq 0-0+1~\text{for}~m\geq 2,\\
&&\chi(G)=2-2+1=1-0+0=1.
\end{eqnarray*}
Similarly, by Example~\ref{ex-aa}, we have that
\begin{eqnarray*}
&&\mathrm{Crit}_*(\bar{G})\cap P_*(G)=\{v_1,v_2,v_5,v_0v_2,v_2v_3,v_2v_4,v_5v_3,v_5v_4,v_0v_2v_3,v_0v_2v_4\},\\
&&\mathrm{Crit}_*(G)=\{v_1,v_2,v_5,v_0v_2,v_2v_3,v_2v_4,v_5v_3,v_5v_4,v_0v_1v_3,v_0v_2v_3,v_0v_1v_4,v_0v_2v_4\},\\
&&\overline{V}(\Omega(G))\subseteq \Omega(G),\\
&&\overline{\Phi}\big(\mathrm{Crit}_*(\bar{G})\cap P_*(G)\big)\subseteq \Omega_*(G).
\end{eqnarray*}
Then
\begin{eqnarray*}
&&l_0=3,~~ l_1=5, ~~ l_2=2,~~l_i=0~\mathrm{for}~i>2,\\
&&L_0=3,~~ L_1=5, ~~ L_2=4,~~L_i=0~ \mathrm{for}~i>2.
\end{eqnarray*}
and
\begin{eqnarray*}
b_0=1,~~b_1=1,~~b_i=0~\mathrm{for}~i>1.
\end{eqnarray*}
Hence
\begin{eqnarray*}
&&L_m\geq l_m~ \text{for any}~m\geq 0,\\
&&3>1~\text{for}~m=0,\\
&&5-3>1-1~\text{for}~m=1,\\
&&2-5+3\geq 0-1+1~\text{for}~m\geq 2,\\
&&\chi(G)=3-5+2=1-1=0.
\end{eqnarray*}
\end{example}

\bigskip
\noindent {\bf Acknowledgement}. The authors would like to express deep gratitude to the reviewer(s) for their careful reading, valuable comments, and helpful suggestions.

\noindent {$^{*}$ Partially supported by the National Science Foundation of China, Grant No. 12071245.}

\noindent {$^{\dag}$ Partially supported by Natural Fund of Cangzhou Science and Technology Bureau (No.197000002) and a Project of Cangzhou Normal University (No.xnjjl1902).}

Yong Lin

 Address: Yau Mathematical Sciences Center, Tsinghua University, Beijing 100084, China.

 e-mail: yonglin@tsinghua.edu.cn

\medskip

Chong Wang  (for correspondence)

 Address: $^1$School of Mathematics, Renmin University of China, Beijing 100872, China.

 $^2$School of Mathematics and Statistics, Cangzhou Normal University, 061000 China.

 e-mail:  wangchong\_618@163.com

  \medskip

\end{document}